\newtheorem{theorem}{Theorem}[section]
\newtheorem{theor}{Theorem}
\newtheorem{teo}[theor]{Theorem}
\newtheorem{lm}[theorem]{Lemma}
\newtheorem{cor}[theorem]{Corollary}
\newtheorem{pr}[theorem]{Proposition}
\begin{document}
\title{Irreducibility of induced modules for general linear supergroups}
\author{Frantisek Marko}
\address{Penn State Hazleton, 76 University Drive, Hazleton, PA 18202, USA}
\email{fxm13@psu.edu}

\begin{abstract}
In this note we determine when is an induced module $H^0_G(\lambda)$,
corresponding to a dominant integral highest weight $\lambda$ of the general
linear supergroup $G=GL(m|n)$ irreducible. Using the contravariant duality
given by the supertrace we obtain a characterization of irreducibility of
Weyl modules $V(\lambda)$. This extends the result of Kac (\cite{kac2}, \cite%
{kac3}) who proved that, for ground fields of characteristic zero, $%
V(\lambda)$ is irreducible if and only if $\lambda$ is typical.
\end{abstract}
\keywords{General linear supergroup, induced module, Weyl module, Kac module, irreducibility}

\maketitle
\section*{Introduction}

Induced modules (sometimes called Schur or costandard modules) play an
important role in the classical representation theory of Schur algebras and
general linear groups. Equally important are Weyl modules (sometimes called
standard modules), which are universal highest weight modules. The Weyl
modules are dual to the induced modules via the contravariant duality
induced by the trace. The induced modules and Weyl modules are building
blocks of the quasi-hereditary structure of the general linear groups. Since
the induced modules have simple socle (and the Weyl modules have simple top)
and all simple modules are obtained this way, it is natural to ask when are
the induced (and Weyl) modules irreducible. James and Mathas (Theorem 5.42
of \cite{mat}) proved a criterion for irreducibility of Weyl modules and
formulated a conjecture for irreducibility of Specht modules (Conjecture
5.47 of \cite{mat}) that play a dominant role in the representation theory
of symmetric groups. This conjecture was proved recently by Fayers in \cite%
{fa1, fa2}.

The classical Lie superalgebras were considered by Kac in \cite{kac}. Kac
also investigated the universal highest weight modules $V(\lambda)$ over a
ground field $K$ of characteristic zero in \cite{kac2, kac3}, and proved
that $V(\lambda)$ is irreducible if and only if $\lambda$ is typical. This
was in sharp contrast to the case of the general linear Lie algebra, in the
case of characteristic zero, when all modules are completely reducible.

The purpose of this paper is to characterize irreducibility of the induced
and Weyl modules for general linear supergroups in the case of arbitrary
characteristic.

Let $G=GL(m|n)$ be a general linear supergroup over a field $K$ of
characteristic different from $2$, and let $G_{ev}$ be the even
subsupergroup of $G$ isomorphic to $GL(m)\times GL(n)$. Denote by $%
\lambda=(\lambda^+|\lambda^-)=(\lambda^+_1, \ldots,\lambda^+_m
|\lambda^-_{1}, \ldots, \lambda^-_{n})$ a dominant integral weight of $G$,
that is $\lambda^+_1\geq \ldots \geq \lambda^+_m$ and $\lambda^-_{1} \geq
\ldots \lambda^-_{n}$. Let $B$ be the Borel subsupergroup of $G$
corresponding to lower triangular matrices and $K_{\lambda}$ be the
one-dimensional (even) $B$-supermodule corresponding to the weight $\lambda$%
. Corresponding to the embedding $B\to G$ there is an induction functor $%
ind_B^G: B-Mod \to G-Mod$ (See section 6 of \cite{bkl}.) 
The induced module $ind_B^G(K_{\lambda})$
will be denoted by $H^0_G(\lambda)$, and analogously, the induced module $%
ind_{B\cap G_{ev}}^{G_{ev}}(K_{\lambda})$ will be denoted by $%
H^0_{G_{ev}}(\lambda)$.

Define $\omega_{ij}(\lambda)=\lambda^+_i+\lambda^-_j+m+1-i-j$ and call the
weight $\lambda$ \emph{atypical} if $\omega_{ij}$ is a multiple of the characteristic 
of the ground field $K$ for some $i=1, \ldots, m$
and $j=1,\ldots, n$; otherwise call the weight $\lambda$ \emph{typical}.

\begin{teo}
\label{0} The $G$-module $H^0_G(\lambda)$ is irreducible if and only if $%
H^0_{G_{ev}}(\lambda)$ is an irreducible $G_{ev}$-module and $\lambda$ is
typical.
\end{teo}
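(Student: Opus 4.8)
The plan is to compute $H^0_G(\lambda)$ by induction in stages through the parabolic subsupergroup $P$ with $B\subset P\subset G$, where $P=G_{ev}\cdot U^-$ and $U^-$ is the odd unipotent subsupergroup with Lie superalgebra $\mathfrak g_{-1}$ sitting in the lower-triangular block. Because $K_\lambda$ is one-dimensional and $U^-$ is unipotent, $K_\lambda$ is inflated from $B\cap G_{ev}$, so that $\mathrm{ind}_B^P(K_\lambda)$ is $H^0_{G_{ev}}(\lambda)$ regarded as a $P$-module on which $U^-$ acts trivially, and
\begin{equation*}
H^0_G(\lambda)\cong \mathrm{ind}_P^G\bigl(H^0_{G_{ev}}(\lambda)\bigr).
\end{equation*}
The quotient $G/P$ is a purely odd affine superspace with coordinate superalgebra $\Lambda(\mathfrak g_1^*)$; hence $\mathrm{ind}_P^G$ is exact, and on restriction to $G_{ev}$ one has $\mathrm{ind}_P^G(M)\cong\Lambda(\mathfrak g_1^*)\otimes M$. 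I would record these structural facts first, together with the fact (noted in the introduction) that $H^0_G(\lambda)$ has simple socle $L_G(\lambda)$, so that $H^0_G(\lambda)$ is irreducible precisely when it equals $L_G(\lambda)$.

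For the even condition, suppose first that $H^0_{G_{ev}}(\lambda)$ is reducible. Its $G_{ev}$-socle $L_{ev}(\lambda)$ is then a proper nonzero submodule, and since $U^-$ acts trivially it is a proper nonzero $P$-submodule; writing $0\to L_{ev}(\lambda)\to H^0_{G_{ev}}(\lambda)\to Q\to 0$ with $Q\neq 0$ and applying the exact functor $\mathrm{ind}_P^G$ produces a proper nonzero $G$-submodule $\mathrm{ind}_P^G(L_{ev}(\lambda))\subsetneq H^0_G(\lambda)$, the properness following from $\dim\mathrm{ind}_P^G(Q)=2^{mn}\dim Q>0$. Thus $H^0_G(\lambda)$ is reducible, which gives necessity of the even condition. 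Conversely, if $H^0_{G_{ev}}(\lambda)$ is irreducible it coincides with $L_{ev}(\lambda)$, and then $H^0_G(\lambda)\cong\mathrm{ind}_P^G(L_{ev}(\lambda))$ is the dual Kac module built on a simple even seed. This reduces the theorem to the assertion that, for a simple even seed, $H^0_G(\lambda)$ is irreducible if and only if $\lambda$ is typical; in characteristic zero the even seed is automatically simple, so this specializes to Kac's theorem.

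To prove the remaining core assertion I would invoke the contravariant duality given by the supertrace, under which $H^0_G(\lambda)$ corresponds to the Weyl (Kac) module $V(\lambda)$ with the same composition factors, so irreducibility of the two is equivalent. Concretely, $V(\lambda)\cong\Lambda(\mathfrak g_{-1})\otimes L_{ev}(\lambda)$ is graded by exterior degree, with the highest weight $\lambda$ in degree zero and simple top $L_G(\lambda)$, and its submodule structure is governed by the contravariant form together with the action of the product $\prod_{i,j}f_{ij}$ of the $mn$ odd lowering root operators, which maps the highest weight line into the lowest graded piece. The crux — and the step I expect to be the main obstacle — is to evaluate the resulting super Shapovalov-type scalar in arbitrary characteristic and to identify it, up to sign and a nonzero constant, with $\prod_{i,j}\omega_{ij}(\lambda)$; this scalar is invertible in $K$ exactly when every $\omega_{ij}(\lambda)$ is nonzero in $K$, i.e. when $\lambda$ is typical, in which case the form is nondegenerate and $V(\lambda)=L_G(\lambda)$ is irreducible. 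For atypical $\lambda$ a vanishing factor $\omega_{ij}(\lambda)\equiv 0$ yields an explicit singular vector generating a proper submodule, establishing reducibility and completing the equivalence.
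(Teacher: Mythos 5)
Your reduction architecture is sound, and parts of it are genuinely cleaner than the paper's: the stage-induction isomorphism $H^0_G(\lambda)\cong \mathrm{ind}_P^G(H^0_{G_{ev}}(\lambda))$ together with $\mathrm{ind}_P^G(-)|_{G_{ev}}\cong \Lambda(\mathfrak{g}_1^*)\otimes(-)$ is exactly the content of Zubkov's isomorphism $\phi:H^0_{G_{ev}}(\lambda)\otimes S(C_{12})\to H^0_G(\lambda)$ that the paper also relies on; your exactness argument for ``even reducible implies super reducible'' is an honest proof of what the paper dismisses as clear; and transferring the problem through the supertrace duality to a Kac-type module $\Lambda(\mathfrak{g}_{-1})\otimes L_{ev}(\lambda)$ is legitimate. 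But there is a genuine gap, and it is precisely the step you flag as ``the main obstacle'': you never evaluate the super-Shapovalov scalar, i.e.\ you never prove that the composite of all $mn$ odd lowering operators carries the highest weight line to $\pm\prod_{i,j}\omega_{ij}(\lambda)$ times the bottom line. That identity is not a routine verification to be filled in later; it is the mathematical content of Theorem \ref{0}, and it is what the paper spends Sections 2, 3 and 5 establishing (Lemmas \ref{1}--\ref{11} and Propositions \ref{7} and \ref{12}), via explicit superderivation formulas for bideterminants, Jacobi's theorem on minors of the adjoint matrix, and Muir's law of extensible minors. Without it, nothing in your argument singles out the linear forms $\omega_{ij}(\lambda)=\lambda^+_i+\lambda^-_j+m+1-i-j$ rather than any other function of $\lambda$; you have reduced the theorem to an unproved key lemma, not proved it.

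Two further points. First, even granting the scalar, your typical direction is under-argued: nonvanishing of one matrix entry of the contravariant form does not by itself give nondegeneracy. The actual argument is that any nonzero submodule of $\Lambda(\mathfrak{g}_{-1})\otimes L_{ev}(\lambda)$ must meet the bottom layer $\Lambda^{mn}(\mathfrak{g}_{-1})\otimes L_{ev}(\lambda)$ (using freeness over $\Lambda(\mathfrak{g}_{-1})$ and simplicity of the even seed), after which the nonzero scalar drags the highest weight vector into that submodule; this is the contravariant dual of the paper's inductive argument with the elements $e_S$, $x_S$ and Lemma \ref{13}, which you would need to spell out. Second, if you want to complete your route without redoing the paper's determinantal calculus, there is a legitimate shortcut you did not take: the scalar in question is the evaluation at $\lambda$ of the Harish-Chandra projection of $e_M^+e_M$, an element of $U_\mathbb{Z}(\mathfrak{h})$ inside the Kostant $\mathbb{Z}$-form, hence independent of the ground field; Kac's characteristic-zero computation identifies this integer-valued polynomial with $\pm\prod_{i,j}\omega_{ij}$ on the Zariski-dense set of dominant integral weights, and base change to $K$ then yields the identity in characteristic $p$. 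Some such argument must be supplied; as written, the proposal is incomplete exactly where the difficulty lies.
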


One can view the above theorem as a first step towards the linkage principle
for the general linear supergroup in the case of positive characteristic.

Proposition 1.2 of \cite{fa1} gives combinatorial characterizations when
Weyl modules over the general linear group $GL(m)$ or $GL(n)$ are
irreducible. Combining them together and dualizing gives the combinatorial
criterion for the irreducibility of the induced $G_{ev}$-module $%
H_{G_{ev}}(\lambda )$.

The corresponding result for the Weyl module $V(\lambda)$ is obtained by
taking the contravariant dual. In the case when $charK=0$, we obtain a
different proof of the result of Kac mentioned earlier and we also get a
converse of the factorization formula (Theorem 6.20 of \cite{br}) of Berele
and Regev for hook Schur functions.

\section{Definitions and notation}

We start by reviewing definitions and certain results related to the general
linear supergroup $G=GL(m|n)$. Good references for our purposes are \cite{z}%
, \cite{bk} and \cite{zs}.

Let us write a generic $(m+n)\times (m+n)$-matrix $C$ as a block matrix 
\begin{equation*}
C=%
\begin{pmatrix}
C_{11} & C_{12} \\ 
C_{21} & C_{22}%
\end{pmatrix}%
,
\end{equation*}
where $C_{11}, C_{12}, C_{21}$ and $C_{22}$ are matrices of sizes $m\times m$%
, $n\times m$, $m\times n$ and $n\times n$, respectively. Denote by $%
S(C_{12})$ the symmetric superalgebra on coefficients of the matrix $C_{12}$%
, and denote $D= det(C_{11})$ and $D_{22}= det(C_{22})$. The commutative
superalgebra $A(m|n)$ is freely generated by elements $c_{ij} $ for $1\leq
i,j \leq m+n$. The coordinate ring $K[G]$ of $G$ is defined as a
localization of $A(m|n)$ at $det(C)=DD_{22}$. The general linear supergroup $%
G=GL(m|n)$ is a functor $Hom_{sup}(K[G],-)$ of morphisms preserving parity
of elements on commutative superalgebras. In this notation the weight $%
\lambda$ of $G$ is written as $\lambda=(\lambda_1, \ldots,
\lambda_m|\lambda_{m+1}, \ldots, \lambda_{m+n})$. We will identify $\lambda$
with our previous notation $\lambda=(\lambda^+|\lambda^-)$ and use both
notations interchangedly.

We now describe an explicit basis for $H^0_{G_{ev}}(\lambda)$ consisting of
bideterminants.

Assume first that $\lambda$ is a polynomial weight of $G$, that is $%
\lambda^+_m\geq 0$ and $\lambda^-_n\geq 0$. Define a tableaux $T_{\lambda}^+$
of the shape $\lambda^+$ as $T_{\lambda}^+(i,j)=i$ for $i=1, \ldots, m$ and $%
j=1,\ldots, \lambda_i$ and a tableaux $T_{\lambda}^-$ of the shape $\lambda^-
$ as $T_{\lambda}^-(i,j)=i$ for $i=m+1, \ldots, m+n$ and $j=1,\ldots,
\lambda_i$. Denote by $B^+(I)=(T_{\lambda}^+:T(I))$ and $B^-(J)=(T_{%
\lambda}^-:T(J))$ bideterminants corresponding to tableaux $T(I)$ and $T(J)$
of shape $\lambda^+$ and $\lambda^-$, respectively. Then the set of
bideterminants $B^+(I)$ for standard tableaux $T(I)$ forms a basis of the
simple $GL(m)$-module of the highest weight $\lambda^+$ and the set of
bideterminants $B^-(J)$ for standard tableaux $T(J)$ forms a basis of the
simple $GL(n)$-module of the highest weight $\lambda^-$. For more about
bideterminants, see \cite{m}. The induced module $H^0_{G_{ev}}(\lambda)$ has
a basis consisting of products of bideterminant $B^+(I)$ for $I$ standard of
shape $\lambda^+$ and of bideterminants $B^-(J)$ for $J$ standard of shape $%
\lambda^-$.

Now consider the case when $\lambda$ is not polynomial. If $\lambda^+_m<0$,
then the induced $GL(m)$-module $H^0_{GL(m)}(\lambda^+)$ is isomorphic to $%
H^0_{GL(m)}(\lambda^{++})\otimes (D^{\lambda^+_m})$, where the weight $%
\lambda^{++}=\lambda^+ -\lambda^+_m$ is polynomial and $(D^{\lambda^+_m})$
is a one-dimensional $GL(m)$-representation generated by $D^{\lambda^+_m}$.
Therefore the module $H^0_{GL(m)}(\lambda^+)$ has a basis consisting of
products of bideterminants $B^+(I)$ for $I$ standard of shape $\lambda^{++}$
multiplied by $D^{\lambda^+_m}$.

In the supercase, there is a group-like element 
\begin{equation*}
Ber(C)=det(C_{11}-C_{12}C_{22}^{-1}C_{21})det(C_{22})^{-1}
\end{equation*}
which generates an irreducible one-dimensional $G$-module $B$ of the weight $%
\beta=(-1,-1,\ldots, -1|1,1,\ldots, 1)$. Since $H^0_G(\lambda)\cong
H^0_G(\lambda-\lambda^-_n\beta) \otimes B^{\lambda^-_n}$, in what follows we
can assume that $\lambda^-$ is a polynomial weight of $GL(n)$, that is $%
\lambda^-_n\geq 0$, since we can reduce the general case to this one by
tensoring with $B^{\lambda^-_n}$.

Assuming $\lambda^-_n\geq 0$, the module $H^0_{G_{ev}}(\lambda)$ has a basis
that is a product of $D^{\lambda^+_m}$, bideterminants $B^+(I)$ for $I$
standard of shape $\lambda^{++}$ and bideterminants $B^-(J)$ for $J$
standard of shape $\lambda^-$. Denote $v_{ev}^{++}=(T_{\lambda^{++}}:T_{%
\lambda^{++}})$, $v_{ev}^+=v_{ev}^{++}D^{\lambda^+_m}$ and $%
v_{ev}^-=(T_{\lambda}^-:T_{\lambda}^-)$. The element $v_{ev}=v_{ev}^+v_{ev}^-
$ is the highest weight vector in $H^0_{G_{ev}}(\lambda)$ with respect 
to the dominance order corresponding to $B\cap G_{ev}$.

The $G$-supermodule $H^0_G(\lambda)$ is described explicitly using the
isomorphism $\phi :H^0_{G_{ev}}(\lambda)\otimes S(C_{12})\to H^0_G(\lambda)$
defined in Lemma 5.2 of \cite{z}. The map $\phi$ is given on generators as
follows: 
\begin{equation*}
C_{11}\mapsto C_{11}, C_{21}\mapsto C_{21}, C_{12}\mapsto
-C_{11}^{-1}C_{12}, C_{22}\mapsto C_{22}-C_{21}C_{11}^{-1}C_{12}.
\end{equation*}

In order to study the structure of the induced module $H^0_G(\lambda)$, and
of the simple $G$-module $L_G(\lambda)=\langle v \rangle$ of highest weight $%
\lambda$, we will use the superalgebra of distributions $Dist(G)$ of $G$
described in Section 3 of \cite{bk}. Denote $Dist_1(G)=(K[G]/\mathfrak{m}%
^2)^*$, where $*$ is the duality $Hom_K(-,K)$ and $\mathfrak{m}$ is the
kernel of the augmentation map $\epsilon$ of the Hopf algebra $K[G]$, and by 
$e_{ij}$ the elements of $Dist_1(G)$ determined by $e_{ij}(c_{hk}) =
\delta_{ih}\delta_{jk}$ and $e_{ij}(1) = 0$. Denote the parity of $e_{ij}$
to be sum of parities $|i|$ of $i$ and $|j|$ of $j$. Then $e_{ij}$ belong to
the Lie superalgebra $Lie(G)=(\mathfrak{m}/\mathfrak{m}^2)^*$ which is
identified with the general linear Lie superalgebra $\mathfrak{gl}(m|n)$.
Under this identification $e_{ij}$ corresponds to the matrix unit which has
all entries zeroes except the entry at the position $(i,j)$ which equals
one. The commutation relations for the matrix units $e_{ij}$ are given as 
\begin{equation*}
[e_{ab},e_{cd}]=e_{ad}\delta_{bc}+(-1)^{(|a|+|b|)(|c|+|d|)}
e_{cb}\delta_{ad}.
\end{equation*}

Let $U_{\mathbb{C}}$ be the universal enveloping algebra of $\mathfrak{gl}%
(m|n)$ over the field of complex numbers. Then the Kostant $\mathbb{Z}$-form 
$U_\mathbb{Z}$ is generated by elements $e_{ij}$ for odd $e_{ij}$, $%
e_{ij}^{(r)}= \frac{e_{ij}^r}{r!}$ for even $e_{ij}$, and $\binom{e_{ii}}{r}=%
\frac{e_{ii}(e_{ii}-1)\ldots (e_{ii}-r+1)}{r!}$ for all $r>0$. In what
follows we will use PBW theorem, and corresponding to our choice of the
Borel subsupergroup $B$, we order these generators as follows: odd $e_{ij}$
for $i<j$ first, followed by even $e_{ij}^{(r)}$ for $i<j$, then $\binom{%
e_{ii}}{r}$ followed by odd $e_{ij}$ for $i>j$, and then finally folowed by $%
e_{ij}^{(r)}$ for $i>j$.

Returning back to the above description of $H^0_G(\lambda)$, as the image of 
$H^0_{G_{ev}}(\lambda)\otimes S(C_{12})$, it is important to mention that it
follows from \cite{z} that the image of $\phi$ is a subset of $K[G]$, which
has a natural structure of a $G$-module, given by the superalgebra of
distributions $Dist(G)$, and this $G$-module structure coincide with the $G$%
-module structure on $H^0_G(\lambda)$.

From now on, we will view $H^0_{G_{ev}}(\lambda)$ embedded inside $%
H^0_G(\lambda)$ via $\phi$. Under this embedding, the highest vector $v_{ev}$
of $H^0_{G_{ev}}(\lambda)$ is mapped to the highest vector $%
v=\phi(v_{ev})=v^+v^-$ of $H^0_G(\lambda)$ with respect to the dominance order corresponding to $B$, 
where $v^+=v_{ev}^+$ and $%
v^-=\phi(v_{ev}^-)$. To be more precise, we describe the images under $\phi$
of the previously defined bideterminants, used in the construction of a
basis of $H^0_{ev}(\lambda)$.

If $1\leq i_1, \ldots, i_s \leq m+n$, then denote by $D^+(i_1, \ldots, i_s)$
the determinant 
\begin{equation*}
\begin{array}{|ccc|}
c_{1,i_1} & \ldots & c_{1,i_s} \\ 
c_{2,i_1} & \ldots & c_{2,i_s} \\ 
\ldots & \ldots & \ldots \\ 
c_{s,i_1} & \ldots & c_{s,i_s}%
\end{array}%
.
\end{equation*}
Clearly, if some of the numbers $i_1, \ldots, i_s$ coincide, then $%
D^+(i_1,\ldots, i_s)=0$.

Assume that $\lambda^+$ is a polynomial weight and a tableau $T(I)$ of shape 
$\lambda^+$ is such that its entry in the $a$-th row and $b$-column is $%
i_{ab}$. The bideterminant $B^+(I)$ is a product of determinants $%
D^+(i_{1b},\ldots, i_{m,b})$ for $b=1, \ldots \lambda^+_m$, $%
D^+(i_{1b},\ldots, i_{m-1,b})$ for $b=\lambda^+_m+1, \ldots \lambda^+_{m-1}$%
, \ldots $D^+(i_{1b})$ for $b=\lambda^+_2+1,\ldots, \lambda^+_1$. If we
denote the length of the $b$-th column of $T(I)$ by $\ell(b)$, we can write 
\begin{equation*}
B^+(I)=\prod_{b=1}^{\lambda^+_1}D^+(i_{1b},\ldots, i_{\ell(b),b}).
\end{equation*}

If $m+1\leq j_1, \ldots, j_s \leq m+n$, then denote by $D^-(j_1, \ldots,
j_s) $ the determinant 
\begin{equation*}
\begin{array}{|ccc|}
\phi(c_{m+1,j_1}) & \ldots & \phi(c_{m+1,j_s}) \\ 
\phi(c_{m+2,j_1}) & \ldots & \phi(c_{m+2,j_s}) \\ 
\ldots & \ldots & \ldots \\ 
\phi(c_{m+s,j_1}) & \ldots & \phi(c_{m+s,j_s})%
\end{array}%
.
\end{equation*}
Clearly, if some of the numbers $j_1, \ldots, j_s$ coincide, then $%
D^-(j_1,\ldots, j_s)=0$.

Assume that a tableau $T(J)$ of shape $\lambda^-$ is such that its entry in
the $a$-th row and $b$-column is $j_{ab}$. The image of the bideterminant $%
B^-(J)$ under $\phi$ is a product of determinants $D^-(j_{1b},\ldots,
j_{n,b})$ for $b=1, \ldots \lambda^-_n$, $D^-(j_{1b},\ldots, j_{n-1,b})$ for 
$b=\lambda^-_n+1, \ldots \lambda^-_{n-1}$, \ldots $D^-(j_{1b})$ for $%
b=\lambda^-_2+1,\ldots, \lambda^-_1$. If we denote the length of the $b$-th
column of $T(J)$ by $\ell(b)$, we can write 
\begin{equation*}
\phi(B^-(J))=\prod_{b=1}^{\lambda^-_1}D^-(j_{1b},\ldots, j_{\ell(b),b}).
\end{equation*}

The image of $H^0_{G_{ev}}(\lambda)$ inside $H^0_G(\lambda)$ has a basis
consting of product of bideterminants $D^+(i_{1b},\ldots, i_{\ell(b),b})$
and $D^-(j_{1b},\ldots, j_{\ell(b),b})$, where $D^+(1,\ldots, m)$ appears
with exponent $\lambda_m$ (possibly negative if $\lambda$ is not polynomial).

In particular, 
\begin{equation*}
v^+=\prod_{a=1}^m D^+(1,\ldots, a)^{\lambda^+_a-\lambda^+_{a+1}}, \qquad
v^-=\prod_{b=1}^n D^-(m+1, \ldots, m+b)^{\lambda^-_b - \lambda^-_{b+1}},
\end{equation*}
where $\lambda^+_{m+1}=0=\lambda^-_{n+1}$.

The (left) action of $Dist(G)$ on $A(m|n)$ can be expressed in terms of
right superderivations of $A(m|n)$.

A right superderivation $_{ij}D$ of $A(m|n)$ of parity $|i|+|j|\pmod 2$
satisfies 
\begin{equation*}
(ab)_{ij}D=(-1)^{(|i|+|j|)|b|} (a)_{ij}Db+a(b)_{ij}D
\end{equation*}
for $a,b\in A(m|n)$ and is given by $(c_{kl})_{lj}D=c_{kj}$ and $%
(c_{kl})_{ij}D=0$ for $l\neq i$.

The element $e_{kl}$ from $Lie(G)\subseteq Dist(G)$ acts on $c_{ij}$ as $%
e_{kl}\cdot c_{ij}=\delta_{jl}c_{ik}$. Since the last expression $%
\delta_{jl}c_{ik}=(c_{ij}) _{lk}D$, the action of $e_{kl}$ on $A(m|n)$ is
identical to the action of $_{lk}D$ (not $_{kl}D$) on $A(m|n)$. For more
details please consult \cite{zs}.

Moreover, both actions extends uniquely to $K[G]$ using the quotient rule
and we will identify the action of $e_{kl}$ on $K[G]$ with the action of the
superderivation $_{lk}D$ extended to $K[G]$.

Now we have an effective way how to represent elements of $H^0_G(\lambda)$
and the $G$-action on it. We will derive some basic computational formulas
in section \ref{dva}

Finally, let $P$ be the parabolic subsupergroup of $G$ consisting of matrices $C$
that have their lower $n\times m$ block $C_{21}$ equal to zero. 
The universal highest weight module (the Weyl module) $V_G(\lambda)$
can be given as $V_G(\lambda)= Dist(G)\otimes_{Dist(P)}V_{G_{ev}}(\lambda)$,
where $V_{G_{ev}}(\lambda)$ is the Weyl module for $G_{ev}$ that is viewed as 
$P$-supermodule via the epimorhism $P\to G_{ev}$.
By Proposition 5.8 of \cite{z}, $V_G(\lambda)\cong
H^0_G(\lambda)^{\langle \tau \rangle}$, where the supertransposition $\tau$
provides a contravariant duality. For the Lie superalgebra $\mathfrak{g}=%
\mathfrak{gl}(m|n)$ and its triangular decomposition $\mathfrak{g}=\mathfrak{%
g}_-\oplus \mathfrak{g}_0\oplus \mathfrak{g}^+$ define the corresponding
module as $V_{\mathfrak{g}}(\lambda)=U(\mathfrak{g})\otimes_{U(\mathfrak{g_0}%
\oplus \mathfrak{g}_+)} V_{\mathfrak{g}_0}(\lambda)$, where $V_{\mathfrak{g}%
_0}(\lambda)$ is the $\mathfrak{g}_0$-Weyl module of the highest weight $%
\lambda$ on which $\mathfrak{g}_+$ acts trivially and $U$'s denote universal
enveloping algebras. In the case when the characteristic of $K$ equals zero,
this coincides with the Kac module $K(\lambda)= U(\mathfrak{g})\otimes_{U(%
\mathfrak{g}_0\oplus \mathfrak{g}_+)} L_{\mathfrak{g}_0}(\lambda)$, where $%
L_{\mathfrak{g}_0}(\lambda)$ is the simple $\mathfrak{g}_0$-module of the
highest weight $\lambda$ on which $\mathfrak{g}_+$ acts trivially.

\section{Basic formulas}

\label{dva}

Denote by $A=\left(%
\begin{array}{cccc}
A_{11} & A_{12} & \ldots & A_{1m} \\ 
A_{21} & A_{22} & \ldots & A_{2m} \\ 
\ldots & \ldots & \ldots & \ldots \\ 
A_{m1} & A_{m2} & \ldots & A_{mm}%
\end{array}%
\right)$ the adjoint of the matrix $C_{11}$. Then $C_{11}^{-1}=\frac{1}{D}A$
and 
\begin{equation*}
y_{ij}=\phi(c_{ij})=\frac{A_{i1}c_{1j}+A_{i2}c_{2j}+\ldots +A_{im}c_{mj}}{D}
\end{equation*}
for $1\leq i\leq m$ and $m+1\leq j \leq m+n$. Moreover, for $m+1\leq k,l
\leq m+n$ we have 
\begin{equation*}
\phi(c_{kl})=c_{kl}-c_{k1}y_{1l} - \ldots -c_{km}y_{ml}.
\end{equation*}

We will determine action of superderivations $_{kl}D$, where $1\leq k\leq m$
and $m+1\leq l\leq m+n$, on various elements of $K[G]$.

\begin{lm}
\label{1} If $1\leq i,k \leq m$ and $m+1\leq j,l \leq m+n$, then 
\begin{equation*}
(y_{ij})_{kl}D=y_{il}y_{kj}.
\end{equation*}
\end{lm}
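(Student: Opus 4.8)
The plan is to reduce everything to the action of $_{kl}D$ on the inverse matrix $C_{11}^{-1}$. First I would rewrite the quantity in a coordinate-free way: since $y_{ij}=\frac{1}{D}\sum_r A_{ir}c_{rj}=\sum_r (C_{11}^{-1})_{ir}c_{rj}=(C_{11}^{-1}C_{12})_{ij}$, it suffices to understand how $_{kl}D$ acts on the entries of $C_{11}^{-1}$ together with the factors $c_{rj}$. The crucial simplification is that, for $1\le k\le m$ and $m+1\le l\le m+n$, the superderivation $_{kl}D$ sends a generator $c_{pq}$ to $\delta_{qk}c_{pl}$, so it annihilates every variable whose column index is not $k$. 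In particular $(c_{rj})_{kl}D=0$ because $j\ge m+1\ne k$, and the entire effect of $_{kl}D$ on $y_{ij}$ comes through the $C_{11}$-dependence of $C_{11}^{-1}$. On the block $C_{11}$ the operator $_{kl}D$ acts as the directional derivative replacing the $k$-th column by the $l$-th column of $C_{12}$; that is, $_{kl}D(C_{11})$ is the matrix whose $k$-th column is the $l$-th column of $C_{12}$ and whose other columns vanish.

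Next I would compute $_{kl}D(C_{11}^{-1})$. Because all entries of $C_{11}$ and of $C_{11}^{-1}$ are even, applying $_{kl}D$ to the identity $C_{11}C_{11}^{-1}=I$ produces no extra signs and yields the usual formula $_{kl}D(C_{11}^{-1})=-\,C_{11}^{-1}\big(\,_{kl}D(C_{11})\big)C_{11}^{-1}$. Substituting the rank-one matrix described above and reading off the $(i,r)$-entry gives $\big(\,_{kl}D(C_{11}^{-1})\big)_{ir}=-\,(C_{11}^{-1}C_{12})_{il}\,(C_{11}^{-1})_{kr}=-\,y_{il}\,(C_{11}^{-1})_{kr}$, where I have used $(C_{11}^{-1}C_{12})_{il}=y_{il}$ and that the even entries of $C_{11}^{-1}$ commute past the odd column of $C_{12}$.

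Finally I would reassemble, applying the right-superderivation Leibniz rule to $y_{ij}=\sum_r (C_{11}^{-1})_{ir}c_{rj}$. Since $_{kl}D$ is odd (parity $|k|+|l|=1$) and each $c_{rj}$ is odd, every term acquires the sign $(-1)^{(|k|+|l|)|c_{rj}|}=-1$, while the term $(C_{11}^{-1})_{ir}(c_{rj})_{kl}D$ vanishes. Hence
\[
(y_{ij})_{kl}D=-\sum_r \big(\,_{kl}D(C_{11}^{-1})\big)_{ir}c_{rj}=\sum_r y_{il}(C_{11}^{-1})_{kr}c_{rj}=y_{il}(C_{11}^{-1}C_{12})_{kj}=y_{il}y_{kj}.
\]
The step I expect to be the main obstacle is precisely this sign bookkeeping: the minus sign coming from the derivative of the inverse and the minus sign coming from the odd–odd Leibniz rule must be tracked exactly, and it is their cancellation that turns the naive answer $-y_{il}y_{kj}$ into the asserted $+y_{il}y_{kj}$. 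An alternative route, differentiating the explicit expression $y_{ij}=D^{-1}\sum_r A_{ir}c_{rj}$ via the quotient rule together with $(D)_{kl}D=Dy_{kl}$, leads to the same identity but forces one to handle derivatives of the cofactors $A_{ir}$; I would prefer the inverse-matrix formulation above since it packages those cofactor derivatives into the single clean formula for $_{kl}D(C_{11}^{-1})$.
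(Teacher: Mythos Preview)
Your argument is correct, and it is a genuinely different and cleaner route than the one in the paper. The paper works directly with the expression $y_{ij}=D^{-1}\sum_r A_{ir}c_{rj}$: it first establishes $(D)_{kl}D=Dy_{kl}$, treats the case $i=k$ by hand, and for $i\neq k$ expands $(Dy_{ij})_{kl}D$ into a double sum of $(m-2)\times(m-2)$ minors $M(ab|ki)$, which it then matches against $(Dy_{il})(Dy_{kj})-(Dy_{kl})(Dy_{ij})$ by invoking the classical Jacobi theorem on minors of the adjoint matrix. Your approach instead packages all of this into the single identity $_{kl}D(C_{11}^{-1})=-C_{11}^{-1}\,(_{kl}D(C_{11}))\,C_{11}^{-1}$, which is legitimate because every entry of $C_{11}$ and of $C_{11}^{-1}$ is even, so the Leibniz rule applied to $C_{11}C_{11}^{-1}=I$ produces no parity signs. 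What this buys you is uniformity: there is no case distinction on $i=k$ versus $i\neq k$, and no appeal to any external determinantal identity, since the Jacobi relation is effectively absorbed into the inverse-matrix formula. The price is that the two minus signs (one from differentiating the inverse, one from the odd--odd Leibniz rule applied to $(C_{11}^{-1})_{ir}\,c_{rj}$ with $|c_{rj}|=1$) must be tracked carefully, exactly as you flag; you have done this correctly and the cancellation yields the stated $+y_{il}y_{kj}$.
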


\begin{proof}
First we show that $(D)_{kl}D=Dy_{kl}$. Write $D=A_{k1}c_{1k}+\ldots
+A_{km}c_{mk}$. Since $(-1)^{k+j}A_{kj}$ is the determinant of the matrix
obtained by removing the $j$-th row and $k$th column from $C_{11}$, it
follows that $A_{kj}$ is a sum of monomials in the variables $c_{rs}$ for $%
r\neq j$ and $s\neq k$. Therefore by the superderivation property of $_{kl}D$
we infer that $(A_{kj})_{kl}D=0$. Since $(c_{ik})_{kl}D=c_{il}$ we conclude
that $(D)_{kl}D=A_{k1}c_{1l}+\ldots +A_{km}c_{ml}=Dy_{kl}$.

Assume that $i=k$. Since $Dy_{kj}=A_{k1}c_{1j}+\ldots +A_{km}c_{mj}$, $%
(A_{ka})_{kl}D=0$ and $(c_{aj})_{kl}D=0$ for every $a=1, \ldots m$, using
the superderivation property of $_{kl}D$ we derive $(Dy_{kj})_{kl}D=0$. Then 
$(Dy_{kj})_{kl}D=-Dy_{kl}y_{kj}+D(y_{kj})_{kl}D$ implies that $%
(y_{kj})_{kl}D=y_{kl}y_{kj}$.

Therefore we can assume $m\geq 2$. For $a\neq c$ and $b\neq d$, denote by $%
M(ab|cd)$ the $(m-2)\times (m-2)$ minor of the matrix $C_{11}$ obtained by
deleting $a$-th and $b$-th row and $c$-th and $d$-th columns. If $a=b$ or $%
c=d$, then set $M(ab,cd)=0$.

Now assume that $i<k$. Expanding the determinant representing $A_{ib}$ by
the column containing entries from the $k$-th column of the matrix $C_{11}$
we obtain 
\begin{equation*}
(Dy_{ij})_{kl}D=(A_{i1}c_{1j}+\ldots + A_{im}c_{mj})_{kl}D
\end{equation*}
\begin{equation*}
=\sum_{b=1}^m \sum_{a=1}^{b-1} (-1)^{a+b+k+i} M(ab, ki) c_{al}c_{bj}
-\sum_{b=1}^m \sum_{a=b+1}^{m} (-1)^{a+b+k+i} M(ab, ki) c_{al}c_{bj}.
\end{equation*}

On the other hand,

\begin{equation*}
(Dy_{il})(Dy_{kj})=(A_{i1}c_{1l}+\ldots + A_{im}c_{ml})(A_{k1}c_{1j}+\ldots
+ A_{km}c_{mj})= \sum_{b=1}^m \sum_{a=1}^m A_{ia}A_{kb}c_{al}c_{bj}
\end{equation*}
and 
\begin{equation*}
(Dy_{kl})(Dy_{ij})=(A_{k1}c_{1l}+\ldots + A_{km}c_{ml})(A_{i1}c_{1j}+\ldots
+ A_{im}c_{mj})= \sum_{b=1}^m \sum_{a=1}^m A_{ka}A_{ib}c_{al}c_{bj}
\end{equation*}
implies that 
\begin{equation*}
(Dy_{il})(Dy_{kj})-(Dy_{kl})(Dy_{ij})=\sum_{b=1}^m \sum_{a=1}^m
(A_{ia}A_{kb}-A_{ka}A_{ib})c_{al}c_{bj}.
\end{equation*}

Using the Jacobi Theorem on minors of the adjoint matrix (see \cite{g},
p.21, \cite{f}, p.57 or Theorem 2.5.2 of \cite{p}) we have that $%
A_{ia}A_{kb}-A_{ka}A_{ib}=(-1)^{a+b+k+i} D M(ab, ki)$ for $a<b$ and $%
A_{ia}A_{kb}-A_{ka}A_{ib}=(-1)^{a+b+k+i+1} D M(ab, ki)$ for $a>b$.

Therefore 
\begin{equation*}
(Dy_{il})(Dy_{kj})-(Dy_{kl})(Dy_{ij})=D(Dy_{ij})_{kl}D=-D^2y_{kl}y_{ij}+D^2(y_{ij})_{kl}D
\end{equation*}
and this implies $(y_{ij})_{kl}D=y_{il}y_{kj}$.

The remaining case $i>k$ can be handled analogously.
\end{proof}

\begin{lm}
\label{2} If $1\leq k \leq m$ and $m+1\leq i,j,l \leq m+n$, then 
\begin{equation*}
(\phi(c_{ij}))_{kl}D=\phi(c_{il})y_{kj}.
\end{equation*}
\end{lm}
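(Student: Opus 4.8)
The plan is to reduce everything to the explicit expansion of $\phi$ and to Lemma \ref{1}. Recall from the formulas preceding Lemma \ref{1} that for indices in the odd range one has $\phi(c_{ij})=c_{ij}-\sum_{a=1}^m c_{ia}y_{aj}$, and likewise $\phi(c_{il})=c_{il}-\sum_{a=1}^m c_{ia}y_{al}$; the target identity is therefore equivalent to the claim that $_{kl}D$ applied to the right-hand expansion of $\phi(c_{ij})$ produces exactly $y_{kj}$ times the expansion of $\phi(c_{il})$. First I would dispose of the pure term $c_{ij}$: since its second index $j$ satisfies $j\ge m+1>m\ge k$, the defining rule $(c_{rs})_{kl}D=\delta_{sk}c_{rl}$ gives $(c_{ij})_{kl}D=0$, so only the sum $\sum_a c_{ia}y_{aj}$ contributes.

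Next I would differentiate each product $c_{ia}y_{aj}$ using the right-superderivation Leibniz rule. The key parity observation is that $_{kl}D$ is odd (its parity is $|k|+|l|=0+1=1$, as $k\le m$ and $l\ge m+1$), and that both factors $c_{ia}$ and $y_{aj}$ are odd, being images of entries crossing the even/odd block boundary. Thus the sign prefactor $(-1)^{(|k|+|l|)|b|}$ with $b=y_{aj}$ equals $-1$, so
\begin{equation*}
(c_{ia}y_{aj})_{kl}D=-(c_{ia})_{kl}D\,y_{aj}+c_{ia}(y_{aj})_{kl}D.
\end{equation*}
Here the first factor is evaluated primitively as $(c_{ia})_{kl}D=\delta_{ak}c_{il}$, while the second factor is handled by Lemma \ref{1}, which (with its index $i$ renamed to $a$) yields $(y_{aj})_{kl}D=y_{al}y_{kj}$.

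Finally I would collect the two resulting sums. The $\delta_{ak}$ in the first piece selects the single term $a=k$ (legitimately, since $k\le m$ lies in the summation range), giving $-c_{il}y_{kj}$; combined with the overall minus sign coming from $\phi(c_{ij})=c_{ij}-\sum_a c_{ia}y_{aj}$ this becomes $+c_{il}y_{kj}$. The second piece reassembles as $-\bigl(\sum_{a=1}^m c_{ia}y_{al}\bigr)y_{kj}$, and factoring out $y_{kj}$ on the right produces $\bigl(c_{il}-\sum_{a=1}^m c_{ia}y_{al}\bigr)y_{kj}=\phi(c_{il})y_{kj}$, as desired. The only genuine obstacle is sign bookkeeping: one must be scrupulous about the parity of $_{kl}D$, the parities of $c_{ia}$ and $y_{aj}$, and the fact that the outer $-$ from the expansion of $\phi$ conspires with the $-1$ from the Leibniz prefactor to restore the correct signs; the algebraic recombination itself is then immediate once Lemma \ref{1} is invoked.
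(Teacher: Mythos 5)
Your proof is correct and follows essentially the same route as the paper's: expand $\phi(c_{ij})=c_{ij}-\sum_{a=1}^m c_{ia}y_{aj}$, kill the pure term, and apply the odd Leibniz rule together with Lemma \ref{1}, with the $a=k$ term supplying the extra $c_{il}y_{kj}$ needed to reassemble $\phi(c_{il})y_{kj}$. The only difference is presentational: you make the parity and sign bookkeeping explicit, which the paper's one-line computation leaves implicit.
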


\begin{proof}
Since $\phi(c_{ij})=c_{ij}-c_{i1}y_{1j} - \ldots -c_{im}y_{mj}$, using Lemma %
\ref{1} we compute 
\begin{equation*}
(\phi(c_{ij}))_{kl}D=-c_{i1}y_{1,l}y_{kj} - c_{i2}y_{2l}y_{kj} -\ldots
+c_{il}y_{kj} - c_{ik}y_{kl}y_{kj} -\ldots -c_{im}y_{ml}y_{kj}=
\phi(c_{il})y_{kj}.
\end{equation*}
\end{proof}

\begin{lm}
\label{3} If $1\leq k \leq m$ and $m+1\leq l,j_1, \ldots, j_s \leq m+n$,
then 
\begin{equation*}
(D^-(j_1, \ldots, j_s))_{kl}D=D^-(l,j_2, \ldots, j_s) y_{kj_1}+D^-(j_1,l,
\ldots, j_s)y_{kj_2}+ \ldots + D^-(j_1, \ldots, j_{s-1},l)y_{kj_s}.
\end{equation*}
\end{lm}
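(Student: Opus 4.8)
The plan is to expand $D^-(j_1,\ldots,j_s)$ by the Leibniz (permutation) formula and apply the superderivation ${}_{kl}D$ term by term, reducing every entry to Lemma~\ref{2}. The essential preliminary is a parity count. Since $1\le k\le m$ and $m+1\le l\le m+n$, the derivation ${}_{kl}D$ has parity $|k|+|l|=0+1=1$, so it is odd. Each entry $\phi(c_{m+a,j_b})$ has parity $|m+a|+|j_b|=1+1=0$ and is therefore even, while each $y_{kj}=\phi(c_{kj})$ has parity $|k|+|j|=0+1=1$ and is odd. Because all the matrix entries are even, the graded product rule for ${}_{kl}D$ collapses to the ordinary Leibniz rule: the sign $(-1)^{(|k|+|l|)|b|}$ equals $1$ whenever $b$ is a product of even entries, so differentiating a monomial $\prod_a\phi(c_{m+a,j_{\sigma(a)}})$ introduces no signs.

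First I would write
\begin{equation*}
D^-(j_1,\ldots,j_s)=\sum_{\sigma\in S_s}\operatorname{sgn}(\sigma)\prod_{a=1}^s\phi(c_{m+a,j_{\sigma(a)}})
\end{equation*}
and apply ${}_{kl}D$. The derivation lands on one factor at a time, and Lemma~\ref{2} gives $(\phi(c_{m+a,j_{\sigma(a)}}))_{kl}D=\phi(c_{m+a,l})\,y_{kj_{\sigma(a)}}$. The new factor $\phi(c_{m+a,l})$ is even, and the remaining untouched entries are even as well, so the odd scalar $y_{kj_{\sigma(a)}}$ can be carried to the far right past even elements at no sign cost.

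The final step is to regroup the resulting double sum over $(\sigma,a)$ according to which column has been differentiated. Fixing a target position $t$ and setting $a=\sigma^{-1}(t)$, the collection of terms with $\sigma(a)=t$ is, up to the common right factor $y_{kj_t}$, exactly the permutation expansion of the determinant in which the $t$-th column $(\phi(c_{m+a,j_t}))_a$ has been replaced by $(\phi(c_{m+a,l}))_a$ --- that is, $D^-(j_1,\ldots,j_{t-1},l,j_{t+1},\ldots,j_s)$. The signs $\operatorname{sgn}(\sigma)$ match those of this determinant directly, so summing over $t=1,\ldots,s$ produces the claimed formula with every coefficient equal to $+1$.

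The one place that genuinely requires care is the sign bookkeeping: one must confirm that the oddness of both ${}_{kl}D$ and the $y_{kj}$ generates no stray signs. This is precisely where the evenness of the entries $\phi(c_{m+a,j_b})$ is decisive, since it makes both the product rule and the extraction of $y_{kj_t}$ sign-free, reducing the super statement to the classical fact that a derivation differentiates a determinant one column at a time. A cofactor-expansion induction on $s$ is also available but less convenient, as deleting a row destroys the consecutive indexing $m+1,\ldots,m+s$ and the resulting minors are no longer of the form $D^-$.
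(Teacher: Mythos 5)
Your proof is correct and is precisely the argument the paper intends: the paper's own proof consists of the single sentence ``Using Lemma~\ref{2}, the proof is straightforward,'' and your write-up — permutation expansion of $D^-(j_1,\ldots,j_s)$, application of Lemma~\ref{2} to one entry at a time, and regrouping by the differentiated column — is the detailed version of exactly that. Your parity bookkeeping is also the right key point: since every entry $\phi(c_{m+a,j_b})$ is even, the graded Leibniz rule for the odd derivation ${}_{kl}D$ and the extraction of the odd factors $y_{kj_t}$ introduce no signs, which is why all coefficients in the stated formula are $+1$.
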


\begin{proof}
Using Lemma \ref{2}, the proof is straightforward.
\end{proof}

Denote by $_{al}\tilde{D}$ the operator that acts on determinant $D^-(j_1,
\ldots, j_s)$ with distinct entries $j_1, \ldots, j_s$ by replacing the
single entry which is equal to $a$ by $l$, say $(D^-(j_1, j_2, \ldots, j_s)) %
\phantom{ }_{j_1,l}D= D^-(l,j_2, \ldots, j_s)$ and so on. Then the above
formula can be written as 
\begin{equation*}
\begin{array}{ll}
(D^-(j_1, \ldots, j_s))_{kl}D= & D^-(j_1,j_2, \ldots, j_s)_{j_1l}\tilde{D}
y_{kj_1}+D^-(j_1,j_2, \ldots, j_s)_{j_2l}\tilde{D}y_{kj_2}+ \\ 
& \ldots + D^-(j_1, \ldots, j_{s-1},j_s)_{j_sl}\tilde{D}y_{kj_s}%
\end{array}%
\end{equation*}
or 
\begin{equation*}
(D^-(j_1, \ldots, j_s))_{kl}D=D^-(j_1,j_2, \ldots, j_s)(\sum_{a=m+1}^{m+n} %
\phantom{ }_{al}\tilde{D} y_{ka})
\end{equation*}
when we define $(D^-(j_1, j_2, \ldots, j_s)) \phantom{ }_{a,l}\tilde{D}=0$
if $a$ does not equal any one of the entries $j_1, \ldots, j_s$.

Extend the definition of the operators $_{al}\tilde{D}$ to products and make
it a derivation.

\begin{lm}
\label{4} Let $1\leq k \leq m$, $m+1\leq l \leq m+n$ and $T(J)$ be a
tableaux of shape $\lambda^-$ such that all of its entries are bigger than $%
m+1$. Then 
\begin{equation*}
(B^-(J))\phantom{ }_{kl}D=\sum_{a=m+1}^{m+n} (B^-(J)) \phantom{ }_{al}\tilde{%
D} y_{ka}.
\end{equation*}
\end{lm}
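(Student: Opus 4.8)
The plan is to apply the superderivation $_{kl}D$ directly to the product expansion of the bideterminant. Recall that under $\phi$ the element $B^-(J)$ is identified with the product $\prod_{b} D^-(j_{1b}, \ldots, j_{\ell(b),b})$ of column determinants, so I would first invoke the superderivation property of $_{kl}D$, namely $(ab)_{kl}D = (-1)^{(|k|+|l|)|b|}(a)_{kl}D\, b + a(b)_{kl}D$, iterated across the factors of this product, and then substitute the single-determinant formula of Lemma \ref{3} for each factor.

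The essential bookkeeping is parity. Since $1 \le k \le m$ and $m+1 \le l \le m+n$, the operator $_{kl}D$ is odd, while every column determinant $D^-(j_{1b},\ldots,j_{\ell(b),b})$ has all of its entries $\phi(c_{pq})$ with both indices in the range $\{m+1, \ldots, m+n\}$ and is therefore \emph{even}. I expect this to be the crux: because each factor of $B^-(J)$ is even, every sign $(-1)^{(|k|+|l|)|b|}$ produced by the iterated Leibniz rule collapses to $+1$, so no signs survive when $_{kl}D$ is distributed over the product. Applying Lemma \ref{3} factorwise then yields a double sum in which each term carries a factor $y_{ka}$, which is odd since $|k|+|a|=1$.

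To finish, I would collect the terms by the index $a$. Here supercommutativity of $K[G]$ does the work: $y_{ka}$ is odd but the remaining determinant factors standing to its right are even, so $y_{ka}$ commutes past them with sign $+1$ and can be pulled to the far right of each term. What is left multiplying $y_{ka}$ is precisely $\sum_b D^-_1 \cdots (D^-_b)_{al}\tilde{D}\cdots D^-_N$, and since $_{al}\tilde{D}$ was defined to be a parity-preserving (even) derivation on products, this sum equals $(B^-(J))_{al}\tilde{D}$. Summing over $a = m+1, \ldots, m+n$ gives the claimed identity. The only point demanding care is confirming that all the superderivation and supercommutation signs are trivial, and this follows uniformly from the evenness of the $D^-$ factors.
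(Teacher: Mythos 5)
Your proof is correct and takes exactly the approach the paper intends: the paper's entire proof is the remark that the lemma ``follows immediately from properties of $_{kl}D$,'' which means precisely what you carried out --- the iterated Leibniz rule applied to the factorization $B^-(J)=\prod_b D^-(j_{1b},\ldots,j_{\ell(b),b})$, followed by Lemma \ref{3} on each factor, with every supersign trivial because the $D^-$ factors are even and $y_{ka}$ therefore slides past them freely. Your writeup simply supplies the parity bookkeeping and the regrouping by the index $a$ that the paper leaves implicit.
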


\begin{proof}
The proof follows immediately from properties of $_{kl}D$.
\end{proof}

\section{Image of $v$ under $_{m,m+1}D \ldots _{m,m+n}D$}

\begin{lm}
\label{5} Let $m\leq l \leq m+n$. Then $(v^+)_{ml}D=\lambda^+_{m}v^+y_{ml}$.
\end{lm}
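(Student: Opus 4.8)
The plan is a direct computation based on the explicit product expression $v^+=\prod_{a=1}^m D^+(1,\ldots,a)^{\lambda^+_a-\lambda^+_{a+1}}$ together with the superderivation property of $_{ml}D$; throughout I take $l$ in the range $m+1\leq l\leq m+n$, so that $_{ml}D$ is odd and $y_{ml}=\phi(c_{ml})$ is defined. The first observation is that, among the factors $D^+(1,\ldots,a)$, only the one with $a=m$, namely $D=D^+(1,\ldots,m)$, involves the $m$-th column; every factor with $a\leq m-1$ uses only columns $1,\ldots,m-1$ and is therefore annihilated by $_{ml}D$, which by definition replaces the $m$-th column by the $l$-th. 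Writing $v^+=u\,D^{\lambda^+_m}$ with $u=\prod_{a=1}^{m-1}D^+(1,\ldots,a)^{\lambda^+_a-\lambda^+_{a+1}}$ an even element not involving column $m$ (here I use the convention $\lambda^+_{m+1}=0$, which makes the exponent of $D$ equal to $\lambda^+_m$), the superderivation rule kills the $u$-term and leaves $(v^+)_{ml}D=u\,(D^{\lambda^+_m})_{ml}D$.

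Next I would differentiate the power. Since $D$ and $u$ are even, hence central in the supercommutative algebra $A(m|n)$, the odd operator $_{ml}D$ passes through them without sign and acts on $D^{\lambda^+_m}$ as an ordinary derivation, giving $(D^{\lambda^+_m})_{ml}D=\lambda^+_m\,D^{\lambda^+_m-1}\,(D)_{ml}D$; via the quotient rule this remains valid when $\lambda^+_m<0$, covering the non-polynomial case. It then remains only to evaluate $(D)_{ml}D$. Expanding $D=D^+(1,\ldots,m)$ and applying $_{ml}D$ termwise replaces its (even) $m$-th column by the $l$-th column, so $(D)_{ml}D=D^+(1,\ldots,m-1,l)$. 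A cofactor expansion of this determinant along its last column yields $\sum_{p=1}^m A_{mp}c_{pl}$, which is exactly $D\,y_{ml}$ by the defining formula for $y_{ml}$; this is the same identity $(D)_{ml}D=Dy_{ml}$ already obtained (for $k=m$) at the start of the proof of Lemma \ref{1}.

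Combining the three steps gives $(v^+)_{ml}D=u\,\lambda^+_m\,D^{\lambda^+_m-1}\,D\,y_{ml}=\lambda^+_m\,u\,D^{\lambda^+_m}\,y_{ml}=\lambda^+_m\,v^+y_{ml}$, as required. I expect the only delicate point to be the parity bookkeeping: one must check that $_{ml}D$, though odd, meets only even factors (the determinant $u$ and the entries of $D$), so that neither the power rule nor the single-column replacement produces an unexpected sign, and that the cofactor identity correctly matches $D^+(1,\ldots,m-1,l)$ with the numerator $A_{m1}c_{1l}+\cdots+A_{mm}c_{ml}$ of $D\,y_{ml}$. Everything else is a routine application of the derivation property and the definitions recorded in Section \ref{dva}.
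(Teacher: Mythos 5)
Your proof is correct and follows essentially the same route as the paper's: both rest on the facts that $_{ml}D$ kills every factor $D^+(1,\ldots,a)$ with $a<m$, that $(D)_{ml}D=Dy_{ml}$ (the identity from the start of the proof of Lemma \ref{1}), and on the power rule applied to $D^{\lambda^+_m}$. The only difference is cosmetic: you factor $v^+=u\,D^{\lambda^+_m}$ and treat the polynomial and non-polynomial cases uniformly (justifying the power rule for negative exponents via the quotient rule), whereas the paper handles the polynomial case first and then reduces the non-polynomial case to it.
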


\begin{proof}
Assume first that $\lambda^+$ is a polynomial weight. Then the bideterminant 
$B^+(T_{\lambda}^+)=\prod_{b=1}^{\lambda^+_1}D^+(1,\ldots,\ell(b))$. The
image 
\begin{equation*}
D^+(1,\dots, m)_{ml}D=(A_{m1}c_{1m}+ A_{m2}c_{2m} + \ldots
A_{mm}c_{mm})_{ml}D
\end{equation*}
\begin{equation*}
=A_{m1}c_{1l}+A_{m2}c_{2l}+ \ldots + A_{mm}c_{ml}=D^+(1,\ldots, m) y_{ml}
\end{equation*}
and consequently, 
\begin{equation*}
(D^+(1,\dots, m)^{\lambda^+_m})_{ml}D=\lambda^+_m (D^+(1,\dots,
m))^{\lambda^+_m} y_{ml}.
\end{equation*}
Since $D^+(1,\dots, s)_{ml}D=0$ for $s<m$, the claim follows.

If $\lambda^+$ is not a polynomial weight, then $(v^{++})_{ml}D=%
\lambda^{++}_m v^{++}y_{ml}=0$ and $(v^+)_{ml}D=(v^{++}D^{%
\lambda^+_m})_{ml}D=v^{++}(D^{\lambda^+_m})_{ml}D=v^{++}\lambda^+_mD^{%
\lambda^+_m}y_{ml}= \lambda^+_mv^+y_{ml}$.
\end{proof}

\begin{lm}
\label{6} Let $1\leq k\leq m$ and $m\leq l \leq m+n$. If $l\leq m+s$, then $%
(D^-(m+1, \ldots, m+s))_{k,l}D=D^-(m+1, \ldots, m+s)y_{kl}$. If $m+s<l$,
then $(D^-(m+1, \ldots, m+s))_{k,l}D$ is a linear combination of $y_{k,m+1}$
through $y_{k, m+s}$.
\end{lm}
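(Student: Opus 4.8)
The plan is to read everything off Lemma \ref{3} by a short combinatorial analysis of its right-hand side, and then to dispose of the boundary value $l=m$ separately. Specializing Lemma \ref{3} to the entries $j_b=m+b$ gives
\[
(D^-(m+1,\ldots,m+s))_{kl}D=\sum_{b=1}^{s}D^-(m+1,\ldots,m+b-1,l,m+b+1,\ldots,m+s)\,y_{k,m+b},
\]
so both assertions reduce to deciding which of the $s$ determinants appearing on the right actually survive. I would treat the generic range $m+1\le l\le m+n$ first and handle $l=m$ at the end.

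Suppose $m+1\le l\le m+s$, say $l=m+a$ with $1\le a\le s$. Then for every index $b\ne a$ the determinant $D^-(m+1,\ldots,m+b-1,l,m+b+1,\ldots,m+s)$ carries the column index $m+a$ twice, once in its original position $a$ and once in the overwritten position $b$, and therefore vanishes. Only the term $b=a$ remains, and there the replacement is trivial, leaving $D^-(m+1,\ldots,m+s)\,y_{k,m+a}=D^-(m+1,\ldots,m+s)\,y_{kl}$, which is the first claim. If instead $m+s<l$, then $l$ equals none of $m+1,\ldots,m+s$, so no two columns can collide, nothing cancels, and the displayed sum already exhibits $(D^-(m+1,\ldots,m+s))_{kl}D$ as a $K[G]$-linear combination of $y_{k,m+1},\ldots,y_{k,m+s}$, the coefficients being the determinants $D^-(m+1,\ldots,m+b-1,l,m+b+1,\ldots,m+s)$. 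This is the second claim.

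The value $l=m$ is the one genuinely outside the scope of Lemma \ref{3}, since $D^-$ was defined only for column indices exceeding $m$, and it is where I expect the only real subtlety. For $l=m$ I would compute $(D^-(m+1,\ldots,m+s))_{k,m}D$ directly from the entries $\phi(c_{ij})$, exploiting that the block $\phi(C_{22})=C_{22}-C_{21}C_{11}^{-1}C_{12}$ is unchanged under multiplication in the $GL(m)$-factor of $G_{ev}$; consequently the even operator $_{k,m}D$, which lies in $\mathfrak{gl}(m)\subseteq\mathfrak{gl}(m|n)$, acts on each $\phi(c_{ij})$ only through its (trivial) $GL(m)$-behaviour, and one reads off the asserted value with $y_{km}=\delta_{km}$. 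Apart from this boundary computation the argument is pure bookkeeping once Lemma \ref{3} is granted; the single step that must be executed with care is the vanishing-by-repeated-column observation, namely verifying that when $l=m+a$ exactly one of the $s$ summands of Lemma \ref{3} survives, and that the degenerate case $l=m$ must be argued on the level of entries rather than through the determinantal expansion.
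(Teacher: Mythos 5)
Your main argument is exactly the paper's proof: the paper also specializes Lemma \ref{3} to the entries $m+1,\ldots,m+s$, kills all but one summand by the repeated-column observation when $l\leq m+s$, and reads off the second claim from the shape of the resulting sum when $m+s<l$. For the range $m+1\leq l\leq m+n$ your write-up is correct and complete.

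The one discrepancy is your treatment of $l=m$, a case the paper's proof silently ignores (it invokes only Lemma \ref{3}, which requires $l\geq m+1$, and the lemma is only ever applied with $l\geq m+1$, in Propositions \ref{7} and \ref{12}). You are right that this boundary value lies outside Lemma \ref{3} and would need a separate argument, but your asserted value $y_{km}=\delta_{km}$ is inconsistent with the very invariance you invoke. Since $\phi(C_{22})=C_{22}-C_{21}C_{11}^{-1}C_{12}$ is unchanged when the first $m$ columns of $C$ are multiplied on the right by any element of $GL(m)$, every even superderivation $_{km}D$ with $1\leq k\leq m$ annihilates each entry $\phi(c_{ij})$ --- including $k=m$, since $_{mm}D$ generates the scaling of the $m$-th column, which is also such a column operation. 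Hence $(D^-(m+1,\ldots,m+s))_{km}D=0$ for \emph{all} $k$; one can verify this directly for $m=n=1$, where $(c_{22}-c_{21}c_{12}c_{11}^{-1})_{11}D=0$. On the other hand, the formal extension of $y_{ij}=\frac{1}{D}\sum_a A_{ia}c_{aj}$ to $j=m$ gives $y_{km}=\delta_{km}$ by the adjugate identity, so at $k=l=m$ your formula would claim $(D^-(m+1,\ldots,m+s))_{mm}D=D^-(m+1,\ldots,m+s)\neq 0$, which is false. The consistent reading is that the lemma's hypothesis should be $m+1\leq l\leq m+n$ (equivalently, that $y_{km}$ is to be read as $0$, not $\delta_{km}$); with that emendation your proof coincides with the paper's.
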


\begin{proof}
The fact that $(D^-(m+1, \ldots, m+s))_{k,l}D$ is a linear combination of $%
y_{k,m+1}$ through $y_{k, m+s}$ follows from Lemma \ref{3}. If $l\leq m+s$,
then all but one summand in the formula in Lemma \ref{3} vanishes because of
repetition of entries in $D^-$. The only remaining summand is $D^-(m+1,
\ldots, m+s)y_{kl}$.
\end{proof}

For each $j=1, \ldots, m$ denote by $_{j}D$ the composition of
superderivations

\noindent $_{j,m+1}D\ldots _{j,m+s}D \ldots _{j, m+n}D$, by $y_j$ the
product $y_{j,m+1}\ldots y_{j,m+n}$, and by $\omega_j(\lambda)$ the product $%
\prod_{i=1}^n \omega_{j,i}(\lambda))$.

\begin{pr}
\label{7} 
\begin{equation*}
(v)_{m}D=\omega_{m}(\lambda) vy_{m}.
\end{equation*}
\end{pr}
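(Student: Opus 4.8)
The plan is to expand the composite operator ${}_mD = {}_{m,m+1}D\cdots{}_{m,m+n}D$ acting on $v=v^+v^-$ by the super-Leibniz rule, and to show that after all cancellations only one ``diagonal'' family of terms survives, with total coefficient $\prod_{i=1}^n\omega_{m,i}(\lambda)=\omega_m(\lambda)$. First I would record the single-operator actions on the building blocks. For $l=m+1,\dots,m+n$, Lemma \ref{5} gives $(v^+)_{ml}D=\lambda^+_m v^+ y_{ml}$; Lemma \ref{6} gives the action of ${}_{ml}D$ on each factor $D^-(m+1,\dots,m+b)$ of $v^-$, which is diagonal, equal to $D^-(m+1,\dots,m+b)\,y_{ml}$, when $l\le m+b$ and a combination of $y_{m,m+1},\dots,y_{m,m+b}$ otherwise; and Lemma \ref{1} gives the action on an already produced factor, $(y_{m,m+j})_{m,m+i}D=y_{m,m+i}y_{m,m+j}$. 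Since $v^+$ and all the determinants $D^-$ are even while the $y_{m,l}$ are odd, the super-Leibniz signs are governed solely by the odd $y$-factors, and in each of the three diagonal actions the even block is reproduced unchanged, with a single odd $y_{m,*}$ appended.

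The combinatorial core is the following reduction. In the full expansion each of the $n$ operators lands, in a given term, on one factor and contributes exactly one odd factor $y_{m,m+r}$. Because ${}_{m,m+i}D$ can only create an index $r\le i$ (it creates its own index $i$ when it hits $v^+$, a long $D^-$, or an earlier $y$, and a strictly smaller index only off-diagonally on a short $D^-$), a term is nonzero only if the $n$ produced indices are pairwise distinct, i.e.\ form a permutation $\rho$ of $\{1,\dots,n\}$ with $\rho(i)\le i$; the only such permutation is the identity. Hence every off-diagonal contribution vanishes via $y^2=0$, and only the diagonal term, in which each ${}_{m,m+i}D$ produces $y_{m,m+i}$, survives. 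Its even part is exactly $v^+v^-=v$ and its odd part is $y_{m,m+1}\cdots y_{m,m+n}=y_m$.

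Next I would compute the coefficient contributed by the $i$-th operator in this diagonal term. It splits into three contributions: $\lambda^+_m$ from hitting $v^+$; then $\sum_{b\ge i}(\lambda^-_b-\lambda^-_{b+1})=\lambda^-_i$ from hitting the long factors of $v^-$; and finally one term for each of the $i-1$ previously produced factors $y_{m,m+1},\dots,y_{m,m+i-1}$ that ${}_{m,m+i}D$ may hit via Lemma \ref{1}. A short sign count shows each of these last contributes $-1$ once the newly created $y_{m,m+i}$ is moved into standard position, for a total of $-(i-1)$. Summing yields $\lambda^+_m+\lambda^-_i-i+1=\omega_{m,i}(\lambda)$, and since the set of $y$-factors present after the $i$-th step is the same regardless of these choices, the choices for distinct $i$ are independent and the per-operator coefficients multiply to $\prod_{i=1}^n\omega_{m,i}(\lambda)=\omega_m(\lambda)$, giving $(v)_mD=\omega_m(\lambda)\,v\,y_m$.

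The hard part will be the bookkeeping of signs. Two places must be handled carefully: the super-Leibniz sign $(-1)^{(\text{parity to the right})}$ incurred when an operator reaches across the already produced odd $y$-factors, and the sign from anticommuting the freshly produced $y_{m,m+i}$ past the existing $y$'s into the fixed order $y_{m,m+1}\cdots y_{m,m+i}$. It is precisely the interplay of these two signs that turns each of the $i-1$ ``$y$-hitting'' contributions into $-1$ rather than $+1$, producing the shift $-i+1$ that distinguishes $\omega_{m,i}(\lambda)$ from the naive $\lambda^+_m+\lambda^-_i$; checking that these signs, together with the super-Leibniz sign for hitting the even block $v^+v^-$, combine to leave an overall $+$ sign is the crux of the argument.
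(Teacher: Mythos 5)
Your proposal is correct and is, in substance, the same argument as the paper's: the paper proves the statement by induction on the number of operators applied, using Lemmas \ref{5}, \ref{6} and \ref{1} exactly as you do, killing the off-diagonal part of Lemma \ref{6} against the already-produced odd factors (since squares of the odd $y$'s vanish), and extracting the per-step coefficient $\lambda^+_m+\lambda^-_{s+1}-s=\omega_{m,s+1}(\lambda)$ from the same three contributions you list, so your one-shot Leibniz expansion with the permutation argument $\rho(i)\le i$ is just a repackaging of that induction. The sign bookkeeping you defer is carried out explicitly in the paper's inductive step, where it is shown that $(y_{m,m+1}\ldots y_{m,m+s})\,{}_{m,m+s+1}D=-s\,y_{m,m+1}\ldots y_{m,m+s+1}$, which confirms your claimed $-1$ per $y$-hit and the overall $+$ sign.
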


\begin{proof}
We will proceed by induction. Since 
\begin{equation*}
B^-(T_{\lambda}^-)=\prod_{b=1}^{\lambda^-_1}D^-(m+1,\ldots, m+\ell(b))
\end{equation*}
and each 
\begin{equation*}
(D^-(m+1,\ldots, m+\ell(b)))_{m,m+1}D=(D^-(m+1,\ldots, m+\ell(b))y_{m,m+1}
\end{equation*}
by Lemma \ref{6}, we infer that $(v^-)_{m,m+1}D=\lambda^-_1 v^-$. Using
Lemma \ref{5} we conclude that $(v)_{m,m+1}D=(\lambda^+_m+\lambda^-_{1})v
y_{m,m+1}$.

Next, assume that we have already proved the following formula when we apply 
$s$ superderivations 
\begin{equation*}
(v)_{m,m+1}D\ldots _{m,m+s}D=(\lambda^+_m+\lambda^-_{1}) \ldots
(\lambda^+_m+\lambda^-_{s}-s+1)vy_{m,m+1}\ldots y_{m,m+s}
\end{equation*}
and consider the expression $E=(vy_{m,m+1}\ldots y_{m,m+s})_{m,m+s+1}D$. By
Lemma \ref{5} we have $(v^+)_{m,m+s+1}D=\lambda^+_mv^+y_{m,m+s+1}$ and by
Lemma \ref{6} we obtain that $(v^-)_{m,m+s+1}D-\lambda^-_{s+1}v^-y_{m,m+s+1}$
is a linear combination of elements $y_{m,m+1}$ through $y_{m+s}$. Therefore 
\begin{equation*}
(v)_{m,m+s+1}Dy_{m,m+1}\ldots
y_{m,m+s}=(\lambda^+_m+\lambda^-_{s+1})vy_{m,m+s+1}y_{m,m+1}\ldots y_{m,m+s}.
\end{equation*}
Using Lemma \ref{1} we infer 
\begin{equation*}
(y_{m,m+1}\ldots
y_{m,m+s})_{m,m+s+1}D=(-1)^{s-1}(y_{m,m+1})_{m,m+s+1}Dy_{m,m+2}\ldots
y_{m,m+s} +
\end{equation*}
\begin{equation*}
(-1)^{s-2}y_{m,m+1}(y_{m,m+2})_{m,m+s+1}Dy_{m,m+3}\ldots y_{m,m+s}+ \ldots +
y_{m,m+1}\ldots (y_{m,m+s+1})_{m,m+s+1}D=
\end{equation*}
\begin{equation*}
(-1)^{s-1}y_{m,m+s}y_{m,m+1}y_{m,m+2}\ldots y_{m,m+s} +
(-1)^{s-2}y_{m,m+1}y_{m,m+s}y_{m,m+2}y_{m,m+3}\ldots y_{m,m+s}
\end{equation*}
\begin{equation*}
+\ldots + y_{m,m+1} \ldots y_{m,m+s-1}y_{m,m+s+1}y_{m,m+s}=-sy_{m,m+1}\ldots
y_{m,m+s}y_{m,m+s+1}.
\end{equation*}

Finally, 
\begin{equation*}
E=(-1)^s(v)_{m,m+s+1}Dy_{m,m+1}\ldots y_{m,m+s}+v(y_{m,m+1}\ldots
y_{m,m+s})_{m,m+s+1}D
\end{equation*}
\begin{equation*}
=(\lambda^+_m+\lambda^-_{s+1}-s)vy_{m,m+1}\ldots y_{m,m+s+1}
\end{equation*}
which shows that our formula is valid when we apply $s+1$ superderivations.
The induction concludes the proof.
\end{proof}

\section{Polynomial weight $\protect\lambda$ in the case $char K=0$}

Since we are able to fully address the case of a polynomial weight $\lambda$
when the characteristic $K$ equals zero, we interrupt our progress toward
the proof of Theorem 1 in order to make few observations. In this section we
assume that the characteristic of $K$ is zero and $\lambda$ is a polynomial
weight of $G$.

A partition $\lambda=(\lambda_1 \geq \ldots \geq \lambda_k)$ is called a $%
(m|n)$-hook partition if $\lambda_{m+1}\leq n$. The concept of hook Schur
functions $HS_{\lambda}(x_1,\dots,x_m;y_1,\dots, y_n)$ for $(m|n)$-hook
partitions $\lambda$ , as a generalization of classical Schur functions, was
defined by Berele and Regev in \cite{br} in connection to the representation
theory of general linear Lie superalgebras over a field of characteristic
zero. They have proved the basic properties of hook Schur functions,
including the following factorization formula, see Theorem 6.20 of \cite{br}.

Assume the partition $\lambda$ satisfies $\lambda_m\geq n$. Define the
partitions $\mu=(\mu_1, \ldots, \mu_m)$ and $\nu^{\prime}=(\nu^{\prime}_1,
\ldots, \nu^{\prime}_{k-m})$ in such a way that $\lambda_i=\mu_i+n$ for $%
i=1, \ldots, m$ and $\nu^{\prime}_j=\lambda_{m+j}$ for $j=1, \ldots, k-m$,
and denote by $\nu$ the conjugate of the partition $\nu^{\prime}$. Then 
\begin{equation*}
HS_{\lambda}(x_1,\dots,x_m;y_1,\dots, y_n)=\prod_{i=1}^m\prod_{j=1}^n
(x_i+y_j) S_{\mu}(x_1, \ldots, x_m) S_{\nu}(y_1, \ldots, y_n),
\end{equation*}
where $S_{\mu}$ and $S_{\nu}$ are the Schur functions corresponding to the
partitions $\mu$ and $\nu$. Different proofs of this formula were given in 
\cite{r}, \cite{pt} and \cite{gg}. This formula was further generalized to
the Sergeev-Pragacz formula, see Exercise 24 in I.3 of \cite{md}, \cite{pt}, 
\cite{mj} or \cite{b}.

Let $(\lambda^+|\lambda^-)$ be the highest weight of a simple $G$-module $%
L(\lambda^+|\lambda^-)$. In the case $char K=0$ the highest weights of
simple $G$-modules correspond to $(m|n)$-hook partitions in the following
way. If $\lambda$ is a $(m|n)$-hook partition, then $\lambda^+$ consists of
the first $m$ components of $\lambda$ and $\lambda^-$ is the conjugate
partition to the remaining components of $\lambda$. If the characteristic of 
$K$ is zero, then we will identify the $(m|n)$-hook weight $\lambda$ with
the corresponding $GL(m|n)$-weight $(\lambda^+|\lambda^-)$.

The hook Schur function $HS_{\lambda }(x_{1},\dots ,x_{m};y_{1},\dots
,y_{n}) $ coincides with the character of $L_{G}(\lambda )$. According to
Corollary 5.4 of \cite{z}, the character of the induced module $%
H_{G}^{0}(\lambda )$ is given as 
\begin{equation*}
\prod_{i=1}^{m}\prod_{j=1}^{n}(1+\frac{y_{j}}{x_{i}})S_{\lambda
^{+}}(x_{1},\ldots ,x_{m})S_{\lambda ^{-}}(y_{1},\ldots ,y_{n}).
\end{equation*}%
If $\lambda $ satisfies $\lambda _{m}\geq n$, then $S_{\lambda
^{+}}(x_{1},\ldots ,x_{m})=(x_{1}\ldots x_{m})^{n}S_{\mu }(x_{1},\ldots
,x_{m})$. Since $S_{\nu }(y_{1},\ldots ,y_{n})=S_{\lambda ^{-}}(y_{1},\ldots
,y_{n})$, the factorization formula of Berele and Regev implies $%
L_{G}(\lambda )=H_{G}^{0}(\lambda )$. Therefore this formula gives a
sufficient condition for the irreducibility of $H_{G}^{0}(\lambda )$.

We now prove that the condition $\lambda_m\geq n$ is also necessary for $%
H^o_G(\lambda)$ to be irreducible.

%To determine when is $H^0_G(\la)$ irreducible, we need to dinf a necessary and sufficient condition for the equality
%\[HS_{\la}(x_1,\dots,x_m;y_1,\dots, y_n)=\prod_{i=1}^m\prod_{j=1}^n (1+\frac{y_j}{x_i}) S_{\la^+}(x_1, \ldots, x_m) S_{\la^-}(y_1, \ldots, y_n).\]

\begin{teo}
\label{8} Let $char K=0$, $\lambda$ be a $(m|n)$-hook partition, and $%
H^0_G(\lambda)$ be the induced module. Then $H^0_G(\lambda)$ is irreducible
if and only if $\lambda_m\geq n$.
\end{teo}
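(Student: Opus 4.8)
The plan is to prove the two implications separately, since the forward direction is already in hand. When $\lambda_m\geq n$, the factorization formula of Berele and Regev (applied exactly as in the paragraph preceding the statement) gives $L_G(\lambda)=H^0_G(\lambda)$, so $H^0_G(\lambda)$ is irreducible; it therefore remains to show that $\lambda_m<n$ forces $H^0_G(\lambda)$ to be reducible. Because $H^0_G(\lambda)$ has simple socle $L_G(\lambda)=\langle v\rangle$, irreducibility is equivalent to $Dist(G)\cdot v=H^0_G(\lambda)$, so I would exhibit a nonzero element outside $Dist(G)\cdot v$. The natural candidate is $vy_m=v\,y_{m,m+1}\cdots y_{m,m+n}$, which is the image under $\phi$ of $v_{ev}\cdot(c_{m,m+1}\cdots c_{m,m+n})$; since $\phi$ is injective and the latter is a nonzero element of $H^0_{G_{ev}}(\lambda)\otimes S(C_{12})$, we have $vy_m\neq 0$.

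The combinatorial heart of the argument is the identity, valid for any $(m|n)$-hook partition,
\[
\omega_m(\lambda)=\prod_{j=1}^n\omega_{mj}(\lambda)=0\quad\Longleftrightarrow\quad \lambda_m<n .
\]
To prove it, write $\omega_{mj}(\lambda)=\lambda_m^+ +\lambda_j^- +1-j$, so $\omega_{mj}(\lambda)=0$ is equivalent to $j-1-\lambda_j^-=\lambda_m^+$. The function $j\mapsto j-1-\lambda_j^-$ is strictly increasing, so there is at most one such $j$; and because $\lambda^-$ is the conjugate of the tail $(\lambda_{m+1},\lambda_{m+2},\dots)$, whose parts are all $\leq\lambda_{m+1}\leq\lambda_m=\lambda_m^+$, one checks that $\lambda_{\lambda_m^+ +1}^-=0$, so the unique solution is $j=\lambda_m^+ +1=\lambda_m+1$. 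This value lies in the admissible range $\{1,\dots,n\}$ precisely when $\lambda_m<n$, which proves the equivalence.

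With this in place I would invoke Proposition \ref{7}: since $_{m,m+j}D$ implements the action of the odd operator $e_{m+j,m}$, the operator $_mD={}_{m,m+1}D\cdots{}_{m,m+n}D$ sends $v$ to $\omega_m(\lambda)\,vy_m$, so when $\lambda_m<n$ we have $(v)_mD=0$ and this distribution fails to reach $vy_m$. To upgrade this to $vy_m\notin Dist(G)\cdot v$, I would grade $H^0_G(\lambda)\cong H^0_{G_{ev}}(\lambda)\otimes S(C_{12})$ by degree in the odd coordinates $c_{i,m+j}$ and track the top graded component at the weight of $vy_m$: any $Y\in Dist(G)$ carrying $v$ to that weight must, modulo lower-degree terms, act through the column-$m$ odd operators, and Proposition \ref{7} shows the only contribution to the top component is the multiple $\omega_m(\lambda)\,vy_m=0$. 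Hence $vy_m$ is unreachable, $Dist(G)\cdot v\subsetneq H^0_G(\lambda)$, and $H^0_G(\lambda)$ is reducible. Equivalently, passing to the contravariant dual $V_G(\lambda)\cong H^0_G(\lambda)^{\langle\tau\rangle}$ with highest weight vector $\bar v$, the same computation shows that $s=e_{m+1,m}\cdots e_{m+n,m}\cdot\bar v$ has contravariant self-pairing $\pm\,\omega_m(\lambda)\langle\bar v,\bar v\rangle$ and lies in the radical of the form, so the form degenerates.

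The step I expect to be the main obstacle is precisely this upgrade: showing that the vanishing $(v)_mD=0$ genuinely obstructs reaching $vy_m$ by \emph{any} element of $Dist(G)$, not merely by $_mD$. This is a leading-term, Shapovalov-type analysis, and what makes it tractable is the hook hypothesis together with the combinatorial equivalence above: atypicality of a hook partition is concentrated in row $m$, so one never needs to control the full $mn$-dimensional tower of odd operators, only the column-$m$ operators governed by Proposition \ref{7}. Making this reduction rigorous—verifying that the remaining raising operators (the even ones inside $G_{ev}$, where $\mathrm{char}\,K=0$ guarantees semisimplicity, and the odd operators $e_{i,m+j}$ with $i<m$) contribute nothing to the top graded component—is the technical core of the argument.
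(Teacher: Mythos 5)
Your strategy is the same as the paper's: sufficiency via the Berele--Regev factorization, and necessity by locating the atypicality of a hook partition with $\lambda_m<n$ in row $m$ (your identity $\omega_{m,\lambda_m+1}(\lambda)=\lambda^-_{\lambda_m+1}=0$ is exactly the paper's computation), invoking Proposition \ref{7} to get $(v)_mD=\omega_m(\lambda)\,vy_m=0$, and then arguing that $vy_m$ cannot be reached from $v$. Your combinatorial lemma and your verification that $vy_m\neq 0$ are both correct.

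The genuine gap is the step you yourself flag as the technical core: you never prove that no element of $Dist(G)$ carries $v$ to $vy_m$, and the route you sketch --- an odd-degree filtration with ``Shapovalov-type'' leading-term analysis, control of lower-degree terms, and semisimplicity of $G_{ev}$ in characteristic zero --- is both unexecuted and aimed at the wrong tool. The paper closes this step with a weight-space argument needing no filtration. The element $vy_m$ has weight $\kappa=\lambda+(0,\ldots,0,-n|1,\ldots,1)$, and the character formula gives $\dim H^0_G(\lambda)_\kappa=1$. Compute $(Dist(G)v)_\kappa$ by PBW with the paper's ordering of generators: any monomial containing a raising generator $e_{ij}$, $i<j$, annihilates $v$ (that generator acts on $v$ first, and $\lambda$ is the highest weight of $H^0_G(\lambda)$), and Cartan generators act by scalars, so $(Dist(G)v)_\kappa$ is spanned by lowering monomials of weight $\kappa-\lambda=\sum_{j=1}^{n}(\epsilon_{m+j}-\epsilon_m)$ applied to $v$. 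A short weight count pins these down: summing coefficients of $\epsilon_1,\ldots,\epsilon_m$ shows exactly $n$ odd generators occur; forcing the coefficient of each $\epsilon_i$, $i<m$, to vanish excludes all even $GL(m)$ lowering generators and pushes every odd generator into column $m$; hence all of $e_{m+1,m},\ldots,e_{m+n,m}$ occur (each at most once), leaving weight zero for the even $GL(n)$ lowering generators, so none occur. Since these odd generators supercommute, every such monomial acts as $\pm{}_mD$, whence $(Dist(G)v)_\kappa=K\cdot(v)_mD=0\neq H^0_G(\lambda)_\kappa$ and $L_G(\lambda)=Dist(G)v$ is proper. Note that nothing here uses characteristic zero --- the identical argument appears in the paper's proof of Theorem \ref{0} in arbitrary characteristic --- and your three worries evaporate: the even raising operators and the odd operators $e_{i,m+j}$, $i\leq m$, kill $v$ outright, while odd lowering operators in columns other than $m$ are eliminated by the weight count, not by any leading-term analysis.
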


\begin{proof}
If $\lambda_m \geq n$, then Theorem 6.20 of \cite{br} implies that the
characters of $L(\lambda)$ and $H^0_G(\lambda)$ coincide, hence $%
H^0(\lambda)\cong L(\lambda)$.

Assume now that $\lambda_m<n$. Then $m<m+\lambda_m+1\leq m+n$, $%
\lambda_{m+\lambda_m+1}=0$ and $\lambda_m+ \lambda_{m+\lambda_m+1}
-(\lambda_m+1)+1 =0$. By Proposition \ref{7}, $(v)_{m}D=0$. On the other
hand, the previous character formula for $H^0_G(\lambda)$ implies that the
weight space of $H^0_G(\lambda)$ corresponding to the weight $%
\kappa=\lambda+(0, \ldots, 0, -n|1,\ldots, 1)$ is one-dimensional. Using
commutation relations for the matrix units $e_{ij}$ we infer that odd
superderivations $_{mj}D$ for $m<j$ supercommute. Therefore, applying PBW
theorem with the ordering of the generators of $Dist(G)$ as in Section 1  and 
using the fact the $v$ is the highest vector of $H^0_G(\lambda)$, we
conclude that the weight space $H^0_G(\lambda)_{\kappa}$ is generated by $(v)_mD$. Thus $%
(v)_{m}D=0$ implies that $L_G(\lambda)$ is a proper submodule of $H^0_G(\lambda)$%
.
\end{proof}

\section{Image of $v$ under $_{m}D \ldots _{1}D$}

In order to proceed we need a few definitions.

Let $M=(m_{ij})$ be a matrix of size $s\times s$. Let $\{i_{1},\ldots
,i_{t}\}$ and $\{j_{1},\ldots ,j_{t}\}$ be sequences of elements from $%
\{1,\ldots ,s\}$. Denote by $M(i_{1},\ldots ,i_{t}|j_{1},\ldots j_{t})$ the
determinant of the matrix of size $t\times t$ such that its entry in the $a$%
-th row and the $b$-th column is $m_{i_{a},j_{b}}$. If the entries in $%
\{i_{1},\ldots ,i_{t}\}$ and $\{j_{1},\ldots ,j_{t}\}$ are pair-wise
different, then $M(i_{1},\ldots ,i_{t}|j_{1},\ldots j_{t})$ is the $t$-th
minor of $M$ corresponding to the rows $\{i_{1},\ldots ,i_{t}\}$ and columns 
$\{j_{1},\ldots ,j_{t}\}$ of matrix $M$. We will use this notation for the
matrix $C$ and further denote $C(1,2,\ldots ,t|j_{1},\ldots ,j_{t})$ by $%
C(j_{1},\ldots ,j_{t})$.

\begin{lm}
\label{9'} Let $1\leq i\leq j\leq m$ and $m+1\leq l\leq m+n$. Then 
\begin{equation*}
C(1, \ldots, i-1, l, i+1, \ldots, j)
\end{equation*}
\begin{equation*}
=C(1, \ldots, j)y_{i,l} +(-1)^{i+j} \left[C(1,\ldots, \hat{i}, j,
j+1)y_{j+1,l} + \ldots + C(1, \ldots, \hat{i}, j, m) y_{m,l}\right].
\end{equation*}
\end{lm}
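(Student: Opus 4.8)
The plan is to prove the stated formula by expanding the determinant $C(1,\ldots,i-1,l,i+1,\ldots,j)$ through the action of the superderivations developed in Section~2. The left-hand side is the determinant of the $j\times j$ submatrix of $C$ obtained from the first $j$ rows and columns $1,\ldots,i-1,l,i+1,\ldots,j$, that is, the usual column set $1,\ldots,j$ with the $i$-th column replaced by column $l$ (where $m+1\le l\le m+n$). I would first observe that this object is naturally produced by applying $_{il}D$ to $C(1,\ldots,j)=D^+(1,\ldots,j)$ in a way analogous to Lemma~\ref{5}, since $_{il}D$ replaces a $c_{\cdot i}$ entry by the corresponding $c_{\cdot l}$ entry. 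The point is to relate the raw column-replacement on the left to the \emph{normalized} quantities $y_{kl}=\phi(c_{kl})$ appearing on the right.

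\textbf{The key computation.} The main step is to express $C(1,\ldots,i-1,l,i+1,\ldots,j)$ in terms of the $y_{kl}$. Writing $C_{11}^{-1}=\frac1D A$ and recalling $Dy_{kl}=A_{k1}c_{1l}+\cdots+A_{km}c_{ml}$ from Section~2, I would perform a cofactor expansion of the replaced determinant along the altered $i$-th column, whose entries are now $c_{1l},\ldots,c_{jl}$. Each cofactor is a minor of $C$ that I must re-expand so that the result groups into the "diagonal" term $C(1,\ldots,j)y_{i,l}$ together with the correction terms $C(1,\ldots,\hat i,j,r)y_{r,l}$ for $r=j+1,\ldots,m$. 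The sign $(-1)^{i+j}$ and the appearance of the columns $j,j+1,\ldots,m$ (rather than $1,\ldots,i-1$) suggest that the cleanest route is a Laplace/cofactor argument combined with the Jacobi-type identity on minors of the adjoint matrix $A$ already invoked in the proof of Lemma~\ref{1}. I expect this is exactly where the bookkeeping becomes delicate: one must track which minors of $C_{11}$ survive, verify the signs coming from column permutations, and match them against the stated coefficients.

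\textbf{Alternative via induction on $j-i$.} A second, possibly cleaner, strategy is induction on the gap $j-i$. The base case $i=j$ should reduce directly to Lemma~\ref{5} (or Lemma~\ref{6}) giving $C(1,\ldots,j-1,l)=C(1,\ldots,j)\,y_{j,l}$ with no correction terms, which matches the formula since the bracketed sum is empty when $i=j$. For the inductive step I would apply a superderivation or a single cofactor expansion to peel off the discrepancy between replacing column $i$ versus column $i+1$, using the commutation/derivation formulas of Section~2 to propagate the correction terms $y_{r,l}$. This recasts the problem as a telescoping relation in which the alternating sign $(-1)^{i+j}$ arises naturally from transposing adjacent columns.

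\textbf{Main obstacle.} The hard part will be controlling the sign and the exact index range in the correction sum: producing precisely the columns $j+1,\ldots,m$ paired with $y_{j+1,l},\ldots,y_{m,l}$, each carrying the uniform sign $(-1)^{i+j}$. This requires a careful Laplace expansion together with the adjoint-minor identity $A_{ia}A_{kb}-A_{ka}A_{ib}=\pm DM(ab,ki)$ used in Lemma~\ref{1}, and the chief risk is sign errors in reconciling the cofactor signs of the replaced determinant with the normalization by $D$. Once the signs are verified on the base case and the inductive/Laplace step is checked against them, the identity follows.
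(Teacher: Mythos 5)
There is a genuine gap here, on both of your routes. Your main route stops exactly where the real content of the lemma begins. After the Laplace expansion along the replaced column,
\begin{equation*}
C(1,\ldots,i-1,l,i+1,\ldots,j)=\sum_{s=1}^{j}(-1)^{i+s}\,C(1,\ldots,\hat{s},\ldots,j\,|\,1,\ldots,\hat{i},\ldots,j)\,c_{sl},
\end{equation*}
and after writing each $y_{kl}$ on the right-hand side as $\frac{1}{D}(A_{k1}c_{1l}+\cdots+A_{km}c_{ml})$, the lemma becomes equivalent to the family of determinantal identities (one for each $s=1,\ldots,m$)
\begin{equation*}
C(1,\ldots,j)A_{is}+(-1)^{i+j}\sum_{k=j+1}^{m}C(1,\ldots,\hat{i},\ldots,j,k)A_{ks}=
\begin{cases}
(-1)^{i+s}\,C(1,\ldots,\hat{s},\ldots,j\,|\,1,\ldots,\hat{i},\ldots,j)\,D, & s\leq j,\\
0, & s>j.
\end{cases}
\end{equation*}
You never prove these; you only say that a careful expansion plus "the adjoint-minor identity" of Lemma \ref{1} should yield them after sign-checking. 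But the identity $A_{ia}A_{kb}-A_{ka}A_{ib}=\pm D\,M(ab,ki)$ used in Lemma \ref{1} is quadratic in cofactors, whereas the identity needed here is \emph{linear} in the cofactors $A_{ks}$ and pairs them with $j$-rowed minors of $C_{11}$; it is not a consequence of that $2\times 2$ Jacobi identity in any direct way. The paper's proof supplies precisely this missing step by Muir's law of extensible minors: one takes the Laplace expansion of the complementary minor $C(s,j+1,\ldots,m\,|\,i,j+1,\ldots,m)$ along its first row and adjoins the index set $(1,\ldots,\hat{s},\ldots,j\,|\,1,\ldots,\hat{i},\ldots,j)$ to every symbol, turning it into the identity displayed above; the case $s>j$ follows from the same expansion because the complementary minor then has a repeated row index and vanishes. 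Without this (or an equivalent argument) your first route is a plan, not a proof.

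Your fallback induction on $j-i$ fails already at the base case. You claim that for $i=j$ the bracketed correction sum is empty, so that $C(1,\ldots,j-1,l)=C(1,\ldots,j)\,y_{j,l}$. In fact the sum is indexed by the adjoined column $k=j+1,\ldots,m$ and is empty only when $j=m$ (which is the situation of Lemma \ref{5}); for $i=j<m$ it is nonempty and genuinely needed. Concretely, take $m=2$, $i=j=1$: the lemma asserts $c_{1l}=c_{11}y_{1l}+c_{12}y_{2l}$, which is correct (expand $y_{1l}$ and $y_{2l}$ via the adjugate of $C_{11}$), whereas your base case would assert $c_{1l}=c_{11}y_{1l}$, which is false. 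So the induction cannot start in the form you set it up, and the telescoping step you describe was never made precise either.
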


\begin{proof}
Use the Laplace expansion for $C(1, \ldots, i-1, l, i+1, \ldots, j)$ along
its $i$-th column to get 
\begin{equation*}
C(1, \ldots, i-1, l, i+1, \ldots, j)= \sum_{s=1}^{j} (-1)^{i+s} C(1, \ldots, 
\hat{s}, \ldots, j|1, \ldots, \hat{i}, \ldots j) c_{sl}.
\end{equation*}
Expand the right-hand side to 
\begin{equation*}
\sum_{s=1}^m \left[C(1,\ldots, j) A_{is} +(-1)^{i+j} C(1,\ldots, \hat{i}, j,
j+1) A_{j+1,s} + \ldots + (-1)^{i+j}C(1, \ldots, \hat{i}, j, m) A_{ms}\right]%
\frac{c_{sl}}{D_{11}}.
\end{equation*}

Therefore, we need to show that 
\begin{equation*}
C(1,\ldots, j) A_{is} + (-1)^{i+j}C(1,\ldots, \hat{i}, j, j+1) A_{j+1,s} +
\ldots +(-1)^{i+j} C(1, \ldots, \hat{i}, j, m) A_{ms}
\end{equation*}
%\[=\sum_{k=j}^m (-1)^{k+s} C(1,\ldots, j-1, k) C(1, \ldots, \hat{s}, \ldots, m|1, \ldots, \hat{k}, \ldots m) \]
equals $(-1)^{i+s} C(1,\ldots, \hat{s}, \ldots, j|1, \ldots, \hat{i},
\ldots, j)D_{11}$ if $s\leq j$, and zero otherwise.

These equalities are obtained by using the method of extension (Muir's law
of extensible minors) of determinantal identities - see Sections 7 an 8 of 
\cite{bs}. Assume $s\leq j$. The Laplace expansion of $C(s,j+1, \ldots,
m|i,j+1, \ldots, m)$ along the first row gives 
\begin{equation*}
C(\emptyset|\emptyset)C(s,j+1, \ldots, m|i,j+1, \ldots, m)
\end{equation*}
\begin{equation*}
=C(s|i)C(j+1, \ldots, m|j+1,\ldots, m)
\end{equation*}
\begin{equation*}
+\sum_{k=j+1}^m (-1)^{k+j} C(s,k)C(j+1, \ldots, m|i,j+1, \ldots, \hat{k},
\ldots, m).
\end{equation*}
By adjoining the symbols $(1, \ldots, \hat{s}, \ldots, j|1, \ldots, \hat{i},
\ldots, j)$ we obtain 
\begin{equation*}
C(1, \ldots, \hat{s}, \ldots, j|1, \ldots,\hat{i}, \ldots j)C(1, \ldots, 
\hat{s}, \ldots, j, s,j+1, \ldots, m|1, \ldots, \hat{i}, \ldots, j, i, j+1,
\ldots, m)
\end{equation*}
\begin{equation*}
=C(1, \ldots, \hat{s}, \ldots, j,s|1, \ldots, \hat{i}, \ldots, j,i) C(1,
\ldots, \hat{s}, \ldots, m|1, \ldots, \hat{i}, \ldots, j,j+1,\ldots, m)
\end{equation*}
\begin{equation*}
+\sum_{k=j+1}^m (-1)^{k+j} C(1, \ldots, \hat{s}, \ldots, j,s|1, \ldots, \hat{%
i}, \ldots, j,k) C(1, \ldots, \hat{s}, \ldots m|1, \ldots, \hat{i}, \ldots,
j,i,j+1, \ldots, \hat{k}, \ldots, m).
\end{equation*}
Therefore 
\begin{equation*}
(-1)^{s+i}C(1, \ldots, \hat{s}, \ldots, j|1, \ldots,\hat{i}, \ldots j)D_{11}=
\end{equation*}
\begin{equation*}
=(-1)^{s+i}C(1,\ldots, j)C(1, \ldots, \hat{s},\ldots, m|1, \ldots, \hat{i},
\ldots, m)
\end{equation*}
\begin{equation*}
+(-1)^{s+i}\sum_{k=j+1}^m (-1)^{k+j} C(1, \ldots, \hat{i}, \ldots, j,k) C(1,
\ldots, \hat{s},\ldots, m|1, \ldots,\hat{k}, \ldots, m)
\end{equation*}
and 
\begin{equation*}
C(1, \ldots, \hat{s}, \ldots, j|1, \ldots,\hat{i}, \ldots j)D_{11}
\end{equation*}
\begin{equation*}
=C(1,\ldots, j)A_{is}+(-1)^{i+s}\sum_{k=j+1}^m C(1, \ldots, \hat{i}, \ldots,
j,k) A_{ks}.
\end{equation*}

If $s>j$, then we can use the Laplace expansion of $C(s,j+1, \ldots,
m|i,j+1, \ldots, m)$ along the first row. However, in this case $C(s,j+1,
\ldots, m|i,j+1, \ldots, m)=0$ and $C(1,\ldots,
j)A_{is}+(-1)^{i+s}\sum_{k=j+1}^m C(1, \ldots, \hat{i}, \ldots, j,k)
A_{ks}=0 $.
\end{proof}

\begin{lm}
\label{10} Let $1\leq i \leq m$ and $m+1\leq l\leq m+n$. Then $%
(v^+)_{il}D-\lambda^+_mv^+y_{il}$ is a sum of multiples of $y_{jl}$ for $j>i$%
.
\end{lm}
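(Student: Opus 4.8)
The plan is to differentiate the explicit product
\[
v^+=\prod_{a=1}^m D^+(1,\ldots,a)^{\lambda^+_a-\lambda^+_{a+1}}
\]
one factor at a time and to isolate the coefficient of $y_{il}$. First I would note that the odd superderivation $_{il}D$, which overwrites the $i$-th column by the $l$-th, kills every factor $D^+(1,\ldots,a)$ with $a<i$, because column $i$ does not occur there; thus only the factors with $a\geq i$ contribute. For such an $a$, write $D^+(1,\ldots,a)=C(1,\ldots,a)$ and expand this determinant along its $i$-th column: the cofactors contain no entry of column $i$ and are annihilated by $_{il}D$, while each $c_{ri}$ is sent to $c_{rl}$. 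This yields
\[
(C(1,\ldots,a))_{il}D=C(1,\ldots,i-1,l,i+1,\ldots,a).
\]

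Next I would feed this into Lemma \ref{9'} (taken with $j=a$), which expresses the right-hand side as $C(1,\ldots,a)\,y_{il}$ plus a sum of terms $C(1,\ldots,\hat{i},a,j')\,y_{j'l}$ with $j'=a+1,\ldots,m$. The key structural observation is that each such off-diagonal term carries a $y_{j'l}$ with $j'>a\geq i$, hence with $j'>i$. Because every $D^+(1,\ldots,a)$ is even, all super-Leibniz signs $(-1)^{(|i|+|l|)|b|}$ equal $1$, so the product rule gives
\[
(v^+)_{il}D=\sum_{a=i}^{m}(\lambda^+_a-\lambda^+_{a+1})\,C(1,\ldots,a)^{\lambda^+_a-\lambda^+_{a+1}-1}(C(1,\ldots,a))_{il}D\prod_{b\neq a}C(1,\ldots,b)^{\lambda^+_b-\lambda^+_{b+1}}.
\]
Substituting the expansion above, the diagonal part of the $a$-th summand is $(\lambda^+_a-\lambda^+_{a+1})\,C(1,\ldots,a)^{\lambda^+_a-\lambda^+_{a+1}}\prod_{b\neq a}C(1,\ldots,b)^{\lambda^+_b-\lambda^+_{b+1}}\,y_{il}=(\lambda^+_a-\lambda^+_{a+1})\,v^+y_{il}$, the matching power of $C(1,\ldots,a)$ being reconstituted; the off-diagonal parts stay polynomial for the same reason and are multiples of $y_{j'l}$ with $j'>i$.

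Summing the diagonal contributions, the coefficient of $v^+y_{il}$ telescopes to $\sum_{a=i}^{m}(\lambda^+_a-\lambda^+_{a+1})=\lambda^+_i$ (using $\lambda^+_{m+1}=0$), in agreement with Lemma \ref{5} in the case $i=m$, while everything left over is a sum of multiples of $y_{jl}$ with $j>i$. This is exactly the asserted decomposition of $(v^+)_{il}D$ into its leading $y_{il}$-term plus multiples of $y_{jl}$, $j>i$.

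The step I expect to require the most care is the bookkeeping of the off-diagonal terms: after summing over all surviving $a$, one must verify that no $y_{il}$ (nor any $y_{jl}$ with $j<i$) is concealed among them, which is precisely what the range $j'>a\geq i$ in Lemma \ref{9'} guarantees. By contrast the super-Leibniz signs cause no trouble, since every differentiated factor is even; and one never needs to invert the principal minors $C(1,\ldots,a)$, because the exponents already present in $v^+$ keep every summand polynomial.
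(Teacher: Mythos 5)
Your proof is essentially the paper's own: kill the factors $D^+(1,\ldots,a)$ with $a<i$, identify $(D^+(1,\ldots,a))_{il}D$ with the column-replaced minor $C(1,\ldots,i-1,l,i+1,\ldots,a)$, expand that by Lemma \ref{9'}, and telescope the diagonal contributions through the Leibniz rule. Two remarks. First, the coefficient you obtain, $\lambda^+_i$, is exactly what the paper's own proof derives and what Proposition \ref{12} later uses; the ``$\lambda^+_m$'' in the statement of Lemma \ref{10} is evidently a misprint (correct only when $i=m$), so your deviation from the literal statement is not an error. Second, your closing claim that ``the exponents already present in $v^+$ keep every summand polynomial'' is false when $\lambda^+$ is not a polynomial weight: in that case $D=C(1,\ldots,m)$ occurs in $v^+$ with the negative exponent $\lambda^+_m$, and the summands live in the localization $K[G]$, not in $A(m|n)$. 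This does not sink the computation, because the power rule $(D^N)_{il}D=ND^{N-1}(D)_{il}D$ persists for $N<0$ via the quotient-rule extension of the superderivations to $K[G]$ (equivalently, one can factor $v^+=v^{++}D^{\lambda^+_m}$ and handle the factor $D^{\lambda^+_m}$ by Lemma \ref{5}, which is precisely the separate non-polynomial case in the paper's proof); but as written, that sentence is the one place your argument needs repair to cover the full generality of the lemma.
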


\begin{proof}
Assume first that $\lambda^+$ is a polynomial weight. If $j<i$, then $D^+(1,
\ldots, j)_{il}D=0$. If $j\geq i$, then 
\begin{equation*}
D^+(1,\ldots, j)_{il}D=C(1, \ldots, i-1, l, i+1, \ldots, j)=
\end{equation*}
\begin{equation*}
=D^+(1, \ldots, j)y_{i,l} +(-1)^{i+j} \left[C(1,\ldots, \hat{i}, j,
j+1)y_{j+1,l} + \ldots + C(1, \ldots, \hat{i}, j, m) y_{m,l}\right]
\end{equation*}
by Lemma \ref{9'}.

Since $v^+=B^+(T_{\lambda}^+)=\prod_{b=1}^{\lambda^+_1}D^+(1,\ldots,%
\ell(b))= \prod_{k=1}^m D^+(1,\ldots,k)^{\lambda^+_k-\lambda^+_{k+1}}$, we
infer that $(v^+)_{il}D-\lambda_iv^+y_{il}$ is a sum of multiples of $y_{jl}$
for $j>i$.

Next, assume $\lambda^+$ is not polynomial. Then 
\begin{equation*}
(v^+)_{il}D=(v^{++}D^{\lambda^+_m})_{il}D=(v^{++})_{il}D
D^{\lambda^+_m}+v^{++}\lambda^+_mD^{\lambda^+_m}y_{il}
\end{equation*}
equals a sum of multiples of $y_{jl}$ for $j>i$ and of 
\begin{equation*}
\lambda^{++}_iv^{++}y_{il}D^{\lambda^+_m}+
\lambda^+_mv^{++}D^{\lambda^+_m}y_{il}=
\lambda^{+}_iv^{++}D^{\lambda^+_m}y_{il}=\lambda^+_iv^+y_{il}.
\end{equation*}
\end{proof}

\begin{lm}
\label{11} For every $1\leq i \leq m$ and $1\leq s\leq n$ we have 
\begin{equation*}
(y_m\ldots y_{i+1} y_{i,m} \ldots y_{i,m+s-1}) _{i,m+s}D=(m-i-s+1)y_m\ldots
y_{i+1} y_{i,m} \ldots y_{i,m+s}.
\end{equation*}
\end{lm}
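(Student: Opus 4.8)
The plan is to expand $(P)\,{}_{i,m+s}D$, where $P=y_m\cdots y_{i+1}\,y_{i,m+1}\cdots y_{i,m+s-1}$ is the argument on the left-hand side, by the Leibniz rule and to determine which terms survive. Two facts drive the computation: the operator ${}_{i,m+s}D$ is an \emph{odd} superderivation (since $|i|+|m+s|=1$), and every factor $y_{a,b}$ occurring in $P$ is odd, so that $y_{a,b}^2=0$ because $\operatorname{char}K\neq 2$; consequently any monomial containing a repeated factor $y_{a,b}$ vanishes. Writing $P=p_1\cdots p_N$ as an ordered product of $N$ odd factors and applying the superderivation property repeatedly gives $(P)\,{}_{i,m+s}D=\sum_{k=1}^N(-1)^{N-k}\,p_1\cdots p_{k-1}\,(p_k)\,{}_{i,m+s}D\,p_{k+1}\cdots p_N$, and Lemma~\ref{1} evaluates each hit as $(y_{a,b})\,{}_{i,m+s}D=y_{a,m+s}\,y_{i,b}$.

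Next I would classify the surviving terms. The factors of $P$ are the $y_{j,b}$ with $i<j\le m$, $m+1\le b\le m+n$ (from the blocks $y_m,\dots,y_{i+1}$), together with $y_{i,m+1},\dots,y_{i,m+s-1}$. Hitting $y_{j,b}$ with $j>i$ and $b\neq m+s$ produces a new factor $y_{j,m+s}$ that already occurs inside $y_j$, so that term vanishes. Hitting $y_{j,m+s}$ with $j>i$ produces $y_{j,m+s}y_{i,m+s}$ with no repetition, giving $m-i$ surviving terms (type~A). Hitting a factor $y_{i,m+t}$ of the partial $y_i$-block ($1\le t\le s-1$) produces $y_{i,m+s}y_{i,m+t}$, again with no repetition, giving $s-1$ surviving terms (type~B); no other terms survive.

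The heart of the argument is then a uniform sign count, made transparent by the following observation. In a surviving term coming from hitting $p_k=y_{a,b}$, the factor $y_{a,b}$ is replaced by $y_{a,m+s}y_{i,b}$; since $p_1\cdots p_{k-1}\,y_{a,b}\,p_{k+1}\cdots p_N=P$, the resulting monomial is exactly $P$ with the single extra factor $y_{i,m+s}$ inserted at position $k$. For type~A we have $y_{a,m+s}=y_{j,m+s}=p_k$, so $y_{i,m+s}$ is inserted just \emph{after} $p_k$ and must cross the $N-k$ factors $p_{k+1},\dots,p_N$ to reach the canonical ordering $P\,y_{i,m+s}$, contributing a reordering sign $(-1)^{N-k}$; combined with the Leibniz sign $(-1)^{N-k}$ this yields $+1$. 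For type~B the copy $y_{i,m+t}=p_k$ survives and $y_{i,m+s}$ is inserted just \emph{before} $p_k$, so it must cross $p_k,p_{k+1},\dots,p_N$, i.e.\ $N-k+1$ factors, giving $(-1)^{N-k+1}$; combined with $(-1)^{N-k}$ this yields $-1$. Summing the $m-i$ type~A terms and the $s-1$ type~B terms produces $(m-i)-(s-1)=m-i-s+1$ copies of $P\,y_{i,m+s}=y_m\cdots y_{i+1}\,y_{i,m+1}\cdots y_{i,m+s}$, as claimed.

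The only delicate point is this sign bookkeeping: one must be careful that the newly inserted $y_{i,m+s}$ sits on the correct side of $p_k$ (after it for type~A, before it for type~B), since this is exactly what distinguishes the $+1$ from the $-1$ and produces the integer $m-i-s+1$ rather than its negative. The vanishing of the non-surviving terms and the per-factor evaluation via Lemma~\ref{1} are routine once the odd-derivation conventions are fixed; specializing to $i=m$ (empty blocks, so only type~B survives) recovers the identity $(y_{m,m+1}\cdots y_{m,m+s-1})\,{}_{m,m+s}D=-(s-1)\,y_{m,m+1}\cdots y_{m,m+s}$ used in the proof of Proposition~\ref{7}.
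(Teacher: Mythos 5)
Your proof is correct and takes essentially the same route as the paper's: both expand by the Leibniz rule for the odd right superderivation, evaluate each hit via Lemma \ref{1}, discard terms containing a repeated odd factor, and balance the $m-i$ positive contributions from the blocks $y_m,\dots,y_{i+1}$ against the $s-1$ negative ones from the partial row (the paper merely organizes this block-by-block, reusing the sign computation from the proof of Proposition \ref{7}, where you do one flat expansion with explicit sign bookkeeping). You also correctly read the misprinted factor $y_{i,m}$ in the statement as $y_{i,m+1}$, which is the form in which the lemma is applied in Proposition \ref{12}.
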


\begin{proof}
If $j>i$, then $(y_m)_{i,m+s}D=y_my_{i,m+s}$ follows from Lemma \ref{1}.
Therefore $(y_m\ldots y_{i+1}) _{i,m+s}D= (m-i) y_m \ldots y_{i+1} y_{i,m+s}$%
. As in the proof of Proposition \ref{7} we derive that $(y_{i,m} \ldots
y_{i,m+s-1}) _{i,m+s}D= -(s-1)y_{i,m} \ldots y_{i,l-1} y_{i,m+s}$. The claim
follows.
\end{proof}

\begin{pr}
\label{12} 
\begin{equation*}
(v)_{m}D \ldots _{1}D= (\prod_{i=1}^m \omega_{i}(\lambda)) v y_{m}\ldots
y_{1}.
\end{equation*}
\end{pr}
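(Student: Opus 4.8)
The plan is to prove Proposition \ref{12} by a double induction. The outer induction runs over the row index $i$, descending from $i=m$ (the base case, which is precisely Proposition \ref{7}) down to $i=1$. For the inductive step I assume
\[
(v)_{m}D\ldots {}_{i+1}D=\Big(\prod_{j=i+1}^{m}\omega_{j}(\lambda)\Big)\,v\,y_{m}\ldots y_{i+1},
\]
and I must show that applying $_{i}D={}_{i,m+1}D\ldots {}_{i,m+n}D$ multiplies the right-hand side by $\omega_{i}(\lambda)=\prod_{t=1}^{n}\omega_{i,t}(\lambda)$ and appends the block $y_{i}=y_{i,m+1}\ldots y_{i,m+n}$. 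As the scalar $\prod_{j>i}\omega_{j}(\lambda)$ plays no role, it suffices to establish
\[
(v\,y_{m}\ldots y_{i+1})\,{}_{i}D=\omega_{i}(\lambda)\,v\,y_{m}\ldots y_{i+1}\,y_{i}.
\]

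I would prove this last identity by an inner induction on $s=1,\ldots,n$, with the hypothesis that after applying $_{i,m+1}D\ldots {}_{i,m+s}D$ one obtains $\big(\prod_{t=1}^{s}\omega_{i,t}(\lambda)\big)\,v\,y_{m}\ldots y_{i+1}\,y_{i,m+1}\ldots y_{i,m+s}$. Writing $Y=y_{m}\ldots y_{i+1}\,y_{i,m+1}\ldots y_{i,m+s-1}$ for the block of $y$'s accumulated so far and applying the odd superderivation $_{i,m+s}D$ to $P=vY$, the superderivation (Leibniz) property splits $P$ into a term where $_{i,m+s}D$ hits $v$ and a term where it hits $Y$; here one uses that $v^{+}$ is even to keep track of the overall sign.

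For the term hitting $v=v^{+}v^{-}$ the decisive point is that almost all summands disappear once multiplied into the accumulated product $Y$. Indeed, Lemma \ref{10} gives $(v^{+})_{i,m+s}D=\lambda^{+}_{i}v^{+}y_{i,m+s}$ plus multiples of $y_{j,m+s}$ with $j>i$; each such $y_{j,m+s}$ is an odd element that already occurs in the factor $y_{j}$ of $Y$, so these terms square to zero and vanish. Similarly, applying Lemma \ref{6} through the factorization $v^{-}=\prod_{b=1}^{n}D^{-}(m+1,\ldots,m+b)^{\lambda^{-}_{b}-\lambda^{-}_{b+1}}$, the factors with $b<s$ produce only $y_{i,m+t}$ with $t<s$, which already occur in $Y$ and vanish, while the factors with $b\geq s$ each yield $y_{i,m+s}$; telescoping gives the surviving coefficient $\sum_{b\geq s}(\lambda^{-}_{b}-\lambda^{-}_{b+1})=\lambda^{-}_{s}$. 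Hence the $v$-term contributes the scalar $\lambda^{+}_{i}+\lambda^{-}_{s}$. The term hitting $Y$ is exactly Lemma \ref{11}, contributing $m-i-s+1$. The sum is
\[
(\lambda^{+}_{i}+\lambda^{-}_{s})+(m-i-s+1)=\lambda^{+}_{i}+\lambda^{-}_{s}+m+1-i-s=\omega_{i,s}(\lambda),
\]
which advances the inner induction.

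The step I expect to be the main obstacle is the sign bookkeeping, exactly as in Proposition \ref{7}: commuting $_{i,m+s}D$ past $Y$ and then moving the newly created odd factor $y_{i,m+s}$ to the right-hand end of the product introduces signs in both the $v$-term and the $Y$-term that must be reconciled so that the two contributions add rather than cancel. Granting this, the inner induction produces the factor $\prod_{t=1}^{n}\omega_{i,t}(\lambda)=\omega_{i}(\lambda)$ together with the full block $y_{i}$, completing the outer step; accumulating the scalars over $i=m,\ldots,1$ yields $\prod_{i=1}^{m}\omega_{i}(\lambda)$ and the product $y_{m}\ldots y_{1}$, which is the assertion.
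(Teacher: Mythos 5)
Your proposal is correct and follows essentially the same route as the paper's own proof: the paper also runs a double induction (descending in the row index $i$, with an inner induction on $s$), reducing each step to the same three ingredients --- Lemma \ref{10} for $v^+$, Lemma \ref{6} applied through the factorization of $v^-$ (your telescoping $\sum_{b\geq s}(\lambda^-_b-\lambda^-_{b+1})=\lambda^-_s$), and Lemma \ref{11} for the accumulated block of $y$'s --- and arrives at the same count $\lambda^+_i+\lambda^-_s+m+1-i-s=\omega_{i,s}(\lambda)$. The sign cancellation you flag as the main obstacle is resolved exactly as in the paper's Proposition \ref{7}, where the Leibniz sign cancels against the sign from moving the newly created odd factor to the end of the product.
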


\begin{proof}
We proceed by induction and show a more general formula 
\begin{equation*}
(v)_{m}D \ldots _{i+1}D_{i,m+1}D \ldots _{i,m+s}D= \prod_{k=i+1}^m
\omega_{k}(\lambda) \prod_{j=1}^{s} \omega_{i,j}(\lambda) v y_m\ldots
y_{i+1}y_{i,m+1}\ldots y_{i,m+s}
\end{equation*}
for $1\leq i \leq m$ and $1\leq s\leq n$.

This formula was already verified for $i=m$ and arbitrary $s$ in the proof
of Proposition \ref{7}. First we will assume that the formula is valid for
some $i$ and $s<n$ and prove it for the indices $i$ and $s+1$. By Lemma \ref%
{10} $(v^+)_{i,m+s+1}D =\lambda^+_{i}v^+y_{i,m+s+1}$ modulo a linear
combination of $y_{j,m+s+1}$ for $j>i$, and by Lemma \ref{6} $%
(v^-)_{i,m+s+1}D=\lambda^-_{s+1}v^-y_{i,m+s+1}$ modulo a linear combination
of $y_{i,m+1}, \ldots y_{i, m+s}$. Using these and Lemma \ref{11} we compute 
\begin{equation*}
(v y_m\ldots y_{i+1}y_{i,m+1}\ldots y_{i,m+s})_{i,m+s+1}D
\end{equation*}
\begin{equation*}
= (\lambda^+_i +\lambda^-_{s+1} +m-i-s)v y_m\ldots y_{i+1}y_{i,m+1}\ldots
y_{i,m+s+1}
\end{equation*}
\begin{equation*}
=\omega_{i,s+1}(\lambda)v y_m\ldots y_{i+1}y_{i,m+1}\ldots y_{i,m+s+1}
\end{equation*}
and the formula follows by induction.

Next we will assume that the formula is valid for certain $i$ and $s=n$ and
prove it for the indices $i+1$ and $s=1$. All the arguments remain analogous
and we determine 
\begin{equation*}
(v y_m\ldots y_{i+1})_{i,m+1}D = (\lambda^+_i +\lambda^-_{1} +m-i)v
y_m\ldots y_{i+1}y_{i,m+1} =\omega_{i,1}(\lambda)v y_m\ldots y_{i+1}y_{i,m+1}
\end{equation*}
which implies the formula in this case as well.
\end{proof}

\section{The conclusion}

Proposition 5.6. of \cite{z} shows that $H^0_G(\lambda)\neq 0$ if and only if $\lambda$ is dominant. 

We require the following lemma that describes action of even
superderivations $_{kl}D$ on elements $y_{ij}$.

\begin{lm}
\label{13} Let $1\leq i \leq m$ and $m+1\leq j\leq m+n$. If $m+1\leq k,l
\leq m+n$, then $(y_{ij})_{kl}D=\delta_{jk}y_{il}$. If $1\leq k,l \leq n$,
then $(y_{ij})_{il}D=0$ and $(y_{ij})_{kl}D=-\delta_{il}y_{kj}$ for $k\neq i$%
. Consequently, $(y_m\ldots y_1)_{kl}D^{(r)}=0$ for each $r>0$ and even
superderivation $_{kl}D$ such that $k\neq l$.
\end{lm}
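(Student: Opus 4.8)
The plan is to realize the elements $y_{ij}$ as the entries of the matrix $Y=C_{11}^{-1}C_{12}$, so that $y_{ij}$ is the $(i,j)$-entry of $Y$ for $1\le i\le m$ and $m+1\le j\le m+n$ (this matches the explicit formula $y_{ij}=\frac{1}{D}\sum_a A_{ia}c_{aj}$), and then to obtain both actions by differentiating the matrix identities that define $Y$. The basic input is the entrywise rule $(c_{pq})_{kl}D=\delta_{qk}c_{pl}$, which I would first repackage as matrix identities: for $1\le k,l\le m$ one has $(C_{11})_{kl}D=C_{11}E_{lk}$ and $(C_{12})_{kl}D=0$ (every column index of $C_{12}$ exceeds $m$), while for $m+1\le k,l\le m+n$ one has $(C_{11})_{kl}D=0$ and $(C_{12})_{kl}D=C_{12}E_{lk}$. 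Here $E_{lk}$ denotes the matrix unit with a single $1$ in position $(l,k)$ of the relevant block.

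For the case $m+1\le k,l\le m+n$ the computation is immediate: since $C_{11}$ involves no variable $c_{pq}$ with $q>m$, the inverse $C_{11}^{-1}$ is annihilated by $_{kl}D$, and the Leibniz rule gives $(Y)_{kl}D=C_{11}^{-1}(C_{12})_{kl}D=YE_{lk}$, whose $(i,j)$-entry is $\delta_{jk}y_{il}$. For the case $1\le k,l\le m$ I would differentiate the identity $C_{11}C_{11}^{-1}=I$ to obtain $(C_{11}^{-1})_{kl}D=-C_{11}^{-1}(C_{11})_{kl}D\,C_{11}^{-1}=-E_{lk}C_{11}^{-1}$; then, since $(C_{12})_{kl}D=0$, the Leibniz rule gives $(Y)_{kl}D=-E_{lk}Y$, whose $(i,j)$-entry is $-\delta_{il}y_{kj}$. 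In particular the choice $k=i$ forces the factor $\delta_{il}$, recovering $(y_{ij})_{il}D=0$ for $l\neq i$.

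For the final assertion I would first record that, because $1\le i\le m<j$, each $y_{ij}$ is odd, so $\mathrm{char}\,K\neq 2$ yields $y_{ij}^2=0$. The key observation is then that every even off-diagonal $_{kl}D$ sends each generator $y_{ij}$ to a scalar multiple of another factor that \emph{already occurs} in the product $y_m\ldots y_1=\prod_{i=1}^m\prod_{q=m+1}^{m+n}y_{iq}$: when $k,l>m$ the image $\delta_{jk}y_{il}$ lies in the same block $y_i$, and when $k,l\le m$ the image $-\delta_{il}y_{kj}$ lies in the block $y_k$, which is present since $k\le m$. Applying $_{kl}D$ to $y_m\ldots y_1$ by the Leibniz rule therefore yields a sum in which each term either vanishes outright or contains a repeated odd factor; hence $(y_m\ldots y_1)_{kl}D=0$. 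Since $_{kl}D^{(r)}=(_{kl}D)^r/r!$ and already the first power annihilates the product, we get $(y_m\ldots y_1)_{kl}D^{(r)}=0$ for all $r>0$.

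The routine parts are the entrywise bookkeeping and the signs in the Leibniz expansions, which do not affect vanishing. The step requiring the most care is the differentiation of $C_{11}^{-1}$ in the case $k,l\le m$, where one must use the derivation of $C_{11}C_{11}^{-1}=I$ rather than attack the explicit adjoint formula directly; the conceptual heart of the ``consequently'' clause is the combination of this with the identity $y_{ij}^2=0$, namely the fact that an even off-diagonal $_{kl}D$ never creates a genuinely new factor but only duplicates an existing odd one.
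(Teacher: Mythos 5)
Your proof is correct, but it reaches the two first-order formulas by a different computational route than the paper. The paper works entrywise from the adjugate expansion $Dy_{ij}=A_{i1}c_{1j}+\ldots+A_{im}c_{mj}$: it computes the action of ${}_{kl}D$ on $D$ and on the cofactors $A_{ia}$ by column-substitution arguments for determinants, and then solves for $(y_{ij})_{kl}D$ from the Leibniz rule applied to the product $D\,y_{ij}$. You instead differentiate the identity $C_{11}C_{11}^{-1}=I$ to get $(C_{11}^{-1})_{kl}D=-E_{lk}C_{11}^{-1}$ and read both formulas off from $Y=C_{11}^{-1}C_{12}$; this avoids all cofactor manipulation, and incidentally sidesteps the paper's garbled intermediate claim ``$(A_{ij})_{il}D=A_{ij}$'' (which should read $(A_{ij})_{il}D=0$, since the cofactor $A_{ij}$ involves no entry of column $i$ -- your route never has to touch this). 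Your handling of the ``consequently'' clause is also more explicit than the paper's, whose proof simply ends with ``which imply the claim'': you isolate exactly the two points that make it work, namely that an even off-diagonal ${}_{kl}D$ can only reproduce factors already present in $y_m\cdots y_1$, and that a product with a repeated odd factor vanishes in a supercommutative algebra when $\mathrm{char}\,K\neq 2$. The one sentence to tighten is the very last step: in positive characteristic one cannot literally write ${}_{kl}D^{(r)}=({}_{kl}D)^r/r!$ over $K$; the clean formulation is that the divided power acts compatibly with the Kostant $\mathbb{Z}$-form of the (localized) coordinate ring, where $r!\,(y_m\cdots y_1)D^{(r)}=(y_m\cdots y_1)D^r=0$, and torsion-freeness of that $\mathbb{Z}$-form forces $(y_m\cdots y_1)D^{(r)}=0$, which then descends to $K$. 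Since the paper does not address this point at all, this is a polish item rather than a gap, and your argument is otherwise complete.
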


\begin{proof}
First observe that in each case $(D)_{kl}D=0$.

Assume now $m+1\leq k,l \leq m+n$. If $k\neq j$, then $%
(Dy_{ij})_{kl}D=(A_{i1}c_{1j}+ \ldots +A_{im}c_{mj})_{kl}D=0$ which implies $%
(y_{ij})_{kl}D=0$. If $k=j$, then $%
D(y_{ij})_{jl}D=(Dy_{ij})_{jl}D=(A_{i1}c_{1j}+ \ldots
+A_{im}c_{mj})_{jl}D=(A_{i1}c_{1l}+ \ldots +A_{im}c_{mj})_{kl}=Dy_{il}$
implies $(y_{ij})_{jl}D=y_{il}$.

Next, assume $1\leq k,l \leq n$. Applying $_{kl}D$ to cofactors $A_{ij}$ we
obtain $(A_{ij})_{il}D=A_{ij}$ and $(A_{ij})_{kl}D=-\delta_{il} A_{kj}$ for $%
k\neq i$. Then $(Dy_{ij})_{il}D=(A_{i1}c_{1j}+ \ldots +A_{im}c_{mj})_{il}D=0$
and $(Dy_{ij})_{kl}D=(A_{i1}c_{1j}+ \ldots
+A_{im}c_{mj})_{kl}D=-\delta_{il}(A_{k1}c_{1j}+ \ldots
+A_{km}c_{mj})=-\delta_{il}Dy_{kj}$ which imply the claim.
\end{proof}

Now we are ready to prove our main result.

\medskip

\textbf{Theorem 1.} \textit{The $G$- module $H^0_G(\lambda)$ is irreducible
if and only if $H^0_{G_{ev}}(\lambda)$ is an irreducible $G_{ev}$-module and 
$\lambda$ is typical.}

\begin{proof}
Assume that $H^0_G(\lambda)$ is irreducible. It is clear that $%
H^0_{G_{ev}}(\lambda)$ is an irreducible $G_{ev}$-module. If $\lambda$ is
atypical, then Proposition \ref{12} shows that $(v)_{m}D \ldots _{1}D=0$. On
the other hand, the character formula for $H^0_G(\lambda)$ implies that the
weight space of $H^0_G(\lambda)$ corresponding to the weight $%
\kappa=\lambda+(-n, \ldots,-n|m,\ldots, m)$ is one-dimensional. Using
commutation relations for the matrix units $e_{ij}$ we infer that odd
superderivations $_{ij}D$ for $i<j$ supercommute. 

Therefore, applying PBW
theorem with the ordering of the generators of $Dist(G)$ as in Section 1  and 
using the fact the $v$ is the highest vector of $H^0_G(\lambda)$, we
conclude that the weight space $H^0_G(\lambda)_{\kappa}$ is generated by $(v)_1D\ldots _mD$.
Thus $(v)_{m}D \ldots _{1}D= 0$ implies that $L_G(\lambda)$ is a proper
submodule of $H^0_G(\lambda)$. Therefore $\lambda$ must be typical if $%
H^0_G(\lambda)$ is irreducible.

Assume now that $H_{G_{ev}}^{0}(\lambda )$ is an irreducible $G_{ev}$-module
and $\lambda $ is typical. Denote by $\mathcal{D}^+$ the subalgebra of $%
Dist(G)$ generated by even $e_{ij}^{(r)}$ and odd $e_{ij}$ for $i>j$ and $r>0
$; and by $\mathcal{D}_{ev}^+$ the subalgebra of $Dist(G)$ generated by even 
$e_{ij}^{(r)}$ for $i>j$ and $r>0$. Since $H^0_{G_{ev}}(\lambda)$ is
irreducible and its the highest weight space is one-dimensional and
generated by $v$, it is generated by $v$ as a $\mathcal{D}_{ev}^+$-module.
Every element $x$ of $\mathcal{D}^+$ can be be written as $x=\sum_{S\subset
M} e_Sx_S,$ where $M=\{1,\dots,m\}\times\{m+1,\ldots,m+n\}$,$%
e_S=\prod_{(i,j)\in S} e_{ij}$ and $x_S\in \mathcal{D}_{ev}^+$. We will show
that if $xv=0$, then $x_Sv=0$ for each $S$ as above. This implies that $%
H^0_G(\lambda)$ is generated by $v$ as a $\mathcal{D}^+$-module and the
character of the simple $G$-module generated by $v$ coincides with the
character of $H^0_G(\lambda)$, and consequently $H^0_G(\lambda)$ is
irreducible.

We proceed by induction and assume that we have already showed that $x_Tv=0$
for all $T\subsetneq S$. The commutation relations for the matrix units $%
e_{ij}$ of the Lie superalgebra $\mathfrak{gl}(m|n)$ imply that $%
e_{M\setminus S}e_S = \pm e_M$ and $[x_S,e_M]=0$. Using that and the
inductive assumption, we get 
\begin{equation*}
0=e_{M\setminus S}xv= e_{M\setminus S}\sum_{T\subset S} e_Tx_Tv=\pm
e_Mx_Sv=\pm x_S(e_Mv).
\end{equation*}
Using Proposition \ref{12} we compute that 
\begin{equation*}
x_S(e_Mv)= (\prod_{i=1}^m \omega_{i}(\lambda)) x_S(v y_{m}\ldots y_{1}).
\end{equation*}
Since $x_S(v y_{m}\ldots y_{1})=(x_S v) y_{m}\ldots y_{1}$ by Lemma \ref{13}
and $\lambda$ is typical, we infer that $x_Sv=0$ for each $S\subset M$.
\end{proof}

Recall the definition of the Weyl module 
$V_G(\lambda)= Dist(G)\otimes_{Dist(P)}V_{G_{ev}}(\lambda)$.

\begin{cor}
The Weyl module $V_G(\lambda)$ is irreducible if and only if $%
V_{G_{ev}}(\lambda)$ is irreducible and $\lambda$ is typical.
\end{cor}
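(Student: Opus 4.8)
The plan is to deduce the statement from Theorem \ref{0} by transporting it through the contravariant duality $\langle \tau \rangle$ furnished by the supertransposition. Recall from Proposition 5.8 of \cite{z} that $\tau$ induces a contravariant, exact, involutive functor on the category of finite-dimensional $G$-supermodules which fixes each simple module $L_G(\lambda)$ and satisfies $V_G(\lambda)\cong H^0_G(\lambda)^{\langle \tau \rangle}$. The same construction applied to $G_{ev}$ (equivalently, restricting $\tau$ along $G_{ev}\hookrightarrow G$) yields $V_{G_{ev}}(\lambda)\cong H^0_{G_{ev}}(\lambda)^{\langle \tau \rangle}$, which is the classical statement that Weyl modules are the contravariant duals of the induced modules for the reductive group $G_{ev}\cong GL(m)\times GL(n)$.

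First I would record the elementary but crucial fact that a contravariant duality preserves irreducibility. Being exact and involutive, $\langle \tau \rangle$ carries the lattice of submodules of a module $M$ to the opposite lattice of submodules of $M^{\langle \tau \rangle}$; in particular $M$ has exactly the two submodules $0$ and $M$ precisely when $M^{\langle \tau \rangle}$ does, so $M$ is irreducible if and only if $M^{\langle \tau \rangle}$ is. Applying this to $M=H^0_G(\lambda)$ shows that $V_G(\lambda)$ is irreducible if and only if $H^0_G(\lambda)$ is, and applying it to $M=H^0_{G_{ev}}(\lambda)$ shows that $V_{G_{ev}}(\lambda)$ is irreducible if and only if $H^0_{G_{ev}}(\lambda)$ is.

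With these equivalences in hand, the corollary follows by substituting into Theorem \ref{0}: the module $H^0_G(\lambda)$ is irreducible if and only if $H^0_{G_{ev}}(\lambda)$ is irreducible and $\lambda$ is typical, which under the two dualities translates verbatim into the assertion that $V_G(\lambda)$ is irreducible if and only if $V_{G_{ev}}(\lambda)$ is irreducible and $\lambda$ is typical. Note that typicality is a purely combinatorial condition on $\lambda$ and is therefore untouched by the passage to duals, so no separate verification is needed on that point.

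The step I expect to need the most care is the compatibility of the two dualities, namely checking that the single supertransposition $\tau$ simultaneously implements the contravariant duality on $G$-supermodules and, upon restriction, the contravariant duality on $G_{ev}$-modules sending $H^0_{G_{ev}}(\lambda)$ to $V_{G_{ev}}(\lambda)$. Since $G_{ev}$ is a genuine reductive group and $\tau$ restricts to the ordinary transpose there, this reduces to the standard fact for $GL(m)\times GL(n)$, so I anticipate no real obstacle; once the duality bookkeeping is set up, the whole argument is a formal consequence of Theorem \ref{0}.
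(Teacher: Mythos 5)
Your proposal is correct and follows exactly the paper's route: the paper's proof is the one-line application of the contravariant duality $V_G(\lambda)\cong H^0_G(\lambda)^{\langle \tau \rangle}$ from Proposition 5.8 of \cite{z} followed by Theorem \ref{0}. You have merely made explicit the bookkeeping the paper leaves implicit (that an exact involutive contravariant duality anti-isomorphs submodule lattices and hence preserves irreducibility, and that the same duality handles the $G_{ev}$ side), which is sound.
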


\begin{proof}
Apply the contravariant duality given by supertransposition $\tau$ to get $%
V_G(\lambda)\cong H^0_G(\lambda)^{\langle \tau \rangle}$ and apply Theorem 1.
\end{proof}

If we further restrict to Weyl modules over general Lie superalgebras and
consider the case when the characteristic of $K$ equals zero, then we
recover the classical result of Kac.

Define the Kac module $K_G(\lambda)=Dist(G)\otimes_{Dist(P)}
L_{G_{ev}}(\lambda)$, where $L_{G_{ev}}(\lambda)$ is the simple $G_{ev}$%
-module of the highest weight $\lambda$, considered as $P$-supermodule 
via the epimorphism $P\to G_{ev}$.  Then $V_G(\lambda)$ has a
filtration by Kac modules induced by the filtration of $V_{G_{ev}}(\lambda)$
by simple $G_{ev}$-modules.

\begin{cor}
The Kac module $K_G(\lambda)$ is irreducible if and only if $\lambda$ is
typical.
\end{cor}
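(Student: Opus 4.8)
The plan is to reduce this statement to the preceding corollary by exploiting that we are working over a field of characteristic zero. First I would recall the two defining formulas $V_G(\lambda)=Dist(G)\otimes_{Dist(P)}V_{G_{ev}}(\lambda)$ and $K_G(\lambda)=Dist(G)\otimes_{Dist(P)}L_{G_{ev}}(\lambda)$, which differ only in the $P$-module induced from $G_{ev}$ via the epimorphism $P\to G_{ev}$. The key observation is that over a field of characteristic zero the even subsupergroup $G_{ev}\cong GL(m)\times GL(n)$ is reductive and its category of rational representations is semisimple, so the $G_{ev}$-Weyl module $V_{G_{ev}}(\lambda)$ is already simple, that is $V_{G_{ev}}(\lambda)\cong L_{G_{ev}}(\lambda)$. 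This isomorphism holds as $G_{ev}$-modules, hence also as $P$-modules through the epimorphism $P\to G_{ev}$.

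Second, I would apply the induction functor $Dist(G)\otimes_{Dist(P)}(-)$ to this isomorphism, obtaining $V_G(\lambda)\cong K_G(\lambda)$. At this stage the question about the Kac module has been translated entirely into the question about the Weyl module already resolved in the previous corollary.

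Third, I would invoke that corollary: $V_G(\lambda)$ is irreducible if and only if $V_{G_{ev}}(\lambda)$ is irreducible and $\lambda$ is typical. Because semisimplicity in characteristic zero forces $V_{G_{ev}}(\lambda)$ to be irreducible for \emph{every} dominant $\lambda$, the irreducibility hypothesis on $V_{G_{ev}}(\lambda)$ is automatically satisfied, and the criterion collapses to the single condition that $\lambda$ be typical. Combining this with the identification $K_G(\lambda)\cong V_G(\lambda)$ yields both directions of the claim at once.

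I expect essentially no serious obstacle in this argument; the two points that deserve care are, first, confirming that complete reducibility applies uniformly to all (not necessarily polynomial) dominant weights $\lambda$ of $GL(m)\times GL(n)$ in characteristic zero, which it does, and second, checking that the isomorphism $V_{G_{ev}}(\lambda)\cong L_{G_{ev}}(\lambda)$ of $P$-modules is compatible with the functor $Dist(G)\otimes_{Dist(P)}(-)$, so that it induces a genuine isomorphism of $G$-modules. Both are routine once the semisimplicity of $G_{ev}$ has been recorded, so the substance of the corollary lies entirely in the characteristic-zero identification of the Kac module with the Weyl module.
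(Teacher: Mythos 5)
Your reduction is valid only over a field of characteristic zero, and that is not the full content of this corollary. In the paper the Kac module is defined as $K_G(\lambda)=Dist(G)\otimes_{Dist(P)}L_{G_{ev}}(\lambda)$ over the same ground field $K$ of characteristic $\neq 2$ used throughout; unlike Theorem \ref{8}, which explicitly carries the hypothesis $char K=0$, this corollary carries no characteristic restriction, and the remark immediately preceding it (that $V_G(\lambda)$ is filtered by Kac modules) has content only in positive characteristic, where $V_{G_{ev}}(\lambda)$ is not simple. In characteristic $p$ your very first step breaks down: $V_{G_{ev}}(\lambda)\not\cong L_{G_{ev}}(\lambda)$ in general, so $K_G(\lambda)\not\cong V_G(\lambda)$, and no formal manipulation of the preceding corollary can yield the statement, because the two criteria genuinely part ways. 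For example, take $G=GL(2|1)$, $char\, K=3$, $\lambda=(3,0|1)$: then $\omega_{11}(\lambda)=5$ and $\omega_{21}(\lambda)=1$, so $\lambda$ is typical, but $V_{GL(2)}(3,0)$ is reducible in characteristic $3$; the preceding corollary therefore says that $V_G(\lambda)$ is \emph{reducible}, while the present corollary asserts that $K_G(\lambda)$ is \emph{irreducible}. So the corollary claims strictly more than what the Weyl-module criterion can give: typicality alone must force irreducibility of $K_G(\lambda)$, in every characteristic.

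The missing argument is obtained by running the proof of Theorem \ref{0} with $L_{G_{ev}}(\lambda)$ in place of $H^0_{G_{ev}}(\lambda)$. Since $L_{G_{ev}}(\lambda)$ is simple, $\phi(L_{G_{ev}}(\lambda))=\mathcal{D}_{ev}^+v$ holds with no hypothesis at all; and the key claim in that proof --- that $xv=0$ for $x=\sum_{S\subset M}e_Sx_S$ forces every $x_Sv=0$ --- uses only typicality (through Proposition \ref{12} and Lemma \ref{13}), not irreducibility of $H^0_{G_{ev}}(\lambda)$. This yields $\dim Dist(G)v=\dim \mathcal{D}^+v=2^{mn}\dim L_{G_{ev}}(\lambda)=\dim K_G(\lambda)$ (the first equality because $v$ is a highest weight vector, the last by the PBW theorem); since $L_G(\lambda)=Dist(G)v$ is the unique simple quotient of the highest weight module $K_G(\lambda)$, the dimension count forces $K_G(\lambda)\cong L_G(\lambda)$. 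Conversely, if $\lambda$ is atypical, Proposition \ref{12} gives $e_Mv=0$ in $H^0_G(\lambda)$, so the weight space of $L_G(\lambda)=Dist(G)v$ at $\kappa=\lambda+(-n,\ldots,-n|m,\ldots,m)$ vanishes, whereas $e_M\otimes v\neq 0$ in $K_G(\lambda)$ by the PBW theorem; hence $K_G(\lambda)$ is reducible. Your characteristic-zero argument is correct as far as it goes and is exactly how the paper's preceding remark recovers Kac's classical theorem, but as a proof of the corollary itself it omits the modular case, which is the case this paper is actually about.
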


\end{document}